 \newtheorem{thm}{}[section]
 \newtheorem{theorem}[thm]{Theorem}
 \newtheorem{lemma}[thm]{Lemma}
 \newtheorem{proposition}[thm]{Proposition}
 \theoremstyle{remark}
 \newtheorem{remark}[thm]{Remark}
 \numberwithin{equation}{section}
\newcommand{\NN}{\ensuremath{\mathbb{N}}}
\newcommand{\ZZ}{\ensuremath{\mathbb{Z}}}
\newcommand{\RR}{\ensuremath{\mathbb{R}}}
\newcommand{\TT}{\ensuremath{\mathbb{T}}}
\newcommand{\XX}{\ensuremath{\mathbb{X}}}
\newcommand{\CC}{\ensuremath{\mathbb{C}}}
\newcommand{\Cont}{\ensuremath{\mathcal{C}}}
\newcommand{\PP}{\ensuremath{\mathcal{P}}}
\newcommand{\xx}{\ensuremath{\mathbf{x}}}
\newcommand{\BB}{\mathcal{B}}
\newcommand{\GG}{\mathcal{G}}
\newcommand{\LL}{\mathscr{L}}
\newcommand{\supp}{\operatorname{supp}}
\newcommand{\spn}{\operatorname{span}}
\begin{document}
% ------------------------------------------------------------------------

\title[Quasi-greedy bases  and Jacobi polynomials in $L_p$ spaces]{Unconditional and quasi-greedy bases in $L_p$ with applications to Jacobi polynomials Fourier series}

\author[F. Albiac]{Fernando Albiac}
\address{Mathematics Department\\ 
Universidad P\'ublica de Navarra\\
Campus de Arrosad\'{i}a\\
Pamplona\\ 
31006 Spain}
% Tel.: +34-948-169553\\
% Fax: +34-948-166057\\
\email{fernando.albiac@unavarra.es}

\author[J. L. Ansorena]{Jos\'e L. Ansorena}
\address{Department of Mathematics and Computer Sciences\\
Universidad de La Rioja\\ 
Logro\~no\\
26004 Spain}
% Tel.: +34-941299464 \\
% Fax: +34-941299460}
\email{joseluis.ansorena@unirioja.es}

\author[\'O. Ciaurri]{\'Oscar Ciaurri}
\address{Department of Mathematics and Computer Sciences\\
Universidad de La Rioja\\ 
Logro\~no\\
26004 Spain}
% Tel.: +34-941299442 \\
% Fax: +34-941299460}
\email{oscar.ciaurri@unirioja.es}

\author[J. L. Varona]{Juan L. Varona}
\address{Department of Mathematics and Computer Sciences\\
Universidad de La Rioja\\ 
Logro\~no\\
26004 Spain}
% Tel.: +34-941299451 \\
% Fax: +34-941299460}
\email{jvarona@unirioja.es}

\subjclass[2000]{46B15, 41A65}

\keywords{Thresholding greedy algorithm, unconditional basis, quasi-greedy basis, $L_p$-spaces, Jacobi polynomials}

% -----------------------
\begin{abstract} 
We  show that  the decreasing rearrangement of the Fourier series with respect to  the Jacobi polynomials for functions in $L_p$  does not converge unless $p=2$. As a by-product of our work on quasi-greedy bases in $L_{p}(\mu)$, we show that no normalized unconditional basis in $L_p$, $p\not=2$, can be semi-normalized in $L_q$ for $q\not=p$, thus extending a classical theorem of Kadets and Pe{\l}czy{\'n}ski from 1968.
\end{abstract}
% -----------------------

%\thanks{}

% -----------------------
\maketitle
% -----------------------

% -----------------------
\section{Introduction and background}
\label{Introduction}
% -----------------------

\noindent A \textit{biorthogonal system} for an infinite-dimensional  (real or complex) separable Banach space
$(\XX, \Vert\cdot\Vert)$
is a family $(\xx_{j},\xx_{j}^{\ast})_{j\in J}\subset \XX\times \XX^{\ast}$ verifying
\begin{itemize}
\item[(i)] $\XX= \overline{\spn \{ \xx_{j} : j\in J\}}$, and
\item[(ii)] $\xx_{j}^{\ast}(\xx_{k})=1$ if $j=k$ and $\xx_{j}^{\ast}(\xx_{k})=0$ otherwise.
%\item $\sup \{ \max\{\Vert \xx_j\Vert,\Vert \xx_{j}^{\ast}\Vert\} : j\in J\}<\infty$.
\end{itemize}
For brevity, we refer to $\BB=(\xx_j)_{j\in J}$ as a \textit{basis} and to the unequivocally determined $\BB^*=(\xx_j^*)_{j\in J}$ as its
\textit{orthogonal family.}
If the biorthogonal system fulfills the additional condition
\begin{itemize}
\item[(iii)] $\sup_{j\in J} \Vert \xx_j\Vert \Vert \xx^*_j\Vert <\infty$
\end{itemize}
we say that the basis and the biorthogonal system are \textit{bounded}. Finally, when the basis verifies
\begin{itemize}
\item[(iv)] $0<\inf_{j\in J} \Vert \xx_j\Vert \le \sup_{j\in J} \Vert \xx_j\Vert <\infty$
\end{itemize}
we say that the basis 
%and the biorthogonal system are 
is \textit{semi-normalized} (respectively \textit{normalized} if $\Vert \xx_j\Vert=1$ for all $j\in J$). 
Notice that a biorthogonal system fulfills simultaneously (iii) and (iv) if and only if
\[
\sup_{j \in J} \max\{ \Vert \xx_j\Vert ,\Vert \xx_j^*\Vert \}<\infty.
\]

Suppose $\BB=(\xx_j)_{j\in J}$ is a semi-normalized bounded basis in a Banach space $\XX$ with orthogonal family
$\BB^*=(\xx_j^*)_{j\in J}$.
 Each $f\in \XX$ has a unique formal series expansion in terms of the basis,
\begin{equation}
\label{formula:SeriesExpansion}
f=\sum_{j\in J} \xx_j^*(f)\xx_j.
\end{equation}
In order to try to make sense of the infinite sum in \eqref{formula:SeriesExpansion}, one can fix a bijective mapping $\pi\colon\NN\to J$ and study the convergence of the formal series
$\sum_{n=1}^\infty \xx_{\pi(n)}^*(f)\xx_{\pi(n)}$. If this series converges to $f$ for every $f\in\XX$ then $\BB$ is a \textit{Schauder basis} for the bijection~$\pi$. Schauder  bases are very well-known and have been widely studied. They are characterized as those bases for which the partial sum operators $S_{\pi,m}\colon\XX\to\XX$, given by
\begin{equation}
\label{definition:PartialSum}
 f\mapsto S_{\pi,m}(f)= \sum_{n=1}^m \xx_{\pi(n)}^*(f)\xx_{\pi(n)}
\end{equation}
are uniformly bounded. The property that  $\sum_{n=1}^\infty \xx_{\pi(n)}^*(f)\xx_{\pi(n)}$ converges for any $f\in\XX$ and any bijection $\pi$
 yields the more restrictive
class of \textit{unconditional bases}. Recall that, equivalently,  a basis is unconditional if and only if
% For each $A\subseteq J$ the natural projection
%\[
%P_A\colon \XX\to \XX, \quad f\mapsto \sum_{j\in A} \xx_j^*(f) \xx_j
%\]
%is well defined, and the family of operators $(P_A)_{A\subseteq\NN}$ is uniformly bounded.
for every choice of signs $\varepsilon=(\varepsilon_j)_{j\in J}\in\{-1,1\}^J$ the multiplier
\[
P_\varepsilon\colon \XX\to \XX, \quad f\mapsto \sum_{j\in J} \varepsilon_j\xx_j^*(f) \xx_j
\]
is well defined and the family of operators $(P_\varepsilon)_{\varepsilon\in \{\pm 1\}^J}$ is uniformly bounded.

An \textit{ordering} for an element $f\in\XX$ (with respect to a basis $\BB$) is a one-to-one map $\rho\colon\NN\to J$ such that $\supp(f) := \{j\in J : \xx_j^*(f)\not=0\} \subseteq \rho(\NN)$.
From the point of view of  approximation theory, given a function $f$ in $\XX$ and an ordering $\rho$ for $f$, the sequence $(S_{\rho,m}(f))_{m=1}^\infty$
constructed as in \eqref{definition:PartialSum} defines an algorithm to approximate to~$f$. The minimal requirement we must impose to $\rho$ is that  $(S_{\rho,m}(f))_{m=1}^\infty$ converges to~$f$. In case $\BB$ is a Schauder basis for some bijection $\pi$, the algorithm based on $\pi$ fulfills this requirement for any $f\in\XX$. The independence of the ordering from the vector determines both the goodness and the limitations of this approximation algorithm for Schauder bases. The operators $S_{\pi,m}$ are linear and uniformly bounded,
but it is natural to wonder if by allowing the ordering to depend on each particular vector we can attain a higher rate of convergence.

The most important algorithm based on letting the ordering depend on the vector is the \textit{greedy algorithm}, also known as the
\textit{thresholding algorithm}. Since for each $f\in\XX$ the sequence $(\xx^*_j(f))_{j\in J}$ belongs to $c_0(J)$, there is an ordering $\rho$ for $f$ such that 
\begin{equation}
\label{greedycondition}
|\xx_{\rho(k)}^{\ast}(f)|\ge |\xx_{\rho(n)}^{\ast}(f)| \quad\text{if}\quad k\le n.
\end{equation}
If the family $(\xx_j^*(f))_{j\in J}$ contains several terms with the same absolute value then such an ordering for $f$ is not uniquely determined. 
In order to get uniqueness, we fix a ``natural'' bijection $\tau \colon J \to \NN$, and we impose the additional condition
\begin{equation}
\label{desempate}
\tau(\rho(k))\le\tau(\rho(n)) \quad\text{whenever}\quad |\xx_{\rho(k)}^{\ast}(f)|= |\xx_{\rho(n)}^{\ast}(f)|.
\end{equation}
If $f$ is infinitely supported, there is a unique ordering $\rho$ for $f$ which fulfills \eqref{greedycondition} and \eqref{desempate}, and such an ordering verifies $\rho(\NN)=\supp (f)$. In the case in which $f$ is finitely supported, there is a unique ordering $\rho$ for $f$ which fulfills \eqref{greedycondition}, \eqref{desempate} and the extra property $\rho(\NN)=J$. In any case, we will refer to such a unique ordering as the \textit{greedy ordering} for~$f$.
For each $m\in \NN$, the \textit{$m$-term greedy approximation} to $f$
is given by
\[
  \GG_{m}[\BB, \XX](f):=\GG_{m}(f) =S_{\rho,m}(f)= \sum_{n=1}^{m}\xx_{\rho(n)}^{\ast}(f)\xx_{\rho(n)},
\]
where $\rho$ is the greedy ordering for $f$, and the sequence $(\GG_m(f))_{m=1}^\infty$ is called the greedy algorithm for $f$ with respect to the basis~$\BB$.

Konyagin and Temlyakov \cite{KonyaginTemlyakov1999} defined a basis to be \textit{quasi-greedy} if $\lim_{m\to \infty}\GG_{m}(f)=f$ for $f\in \XX$, that is, the
greedy algorithm with respect to the basis $\BB$ converges in the Banach space~$\XX$. Subsequently, Wojtaszczyk \cite{Wo2000} proved that these are precisely the bases for which the greedy operators $(\GG_m)_{m=1}^{\infty}$ are uniformly bounded i.e., there exists a constant $C\ge 1$ such that, for all $f\in \XX$ and $m\in \NN$,
\begin{equation}
\label{result:WoCondition}
\Vert \GG_m(f)\Vert\le C \Vert f\Vert.
\end{equation}
Notice the similarity between \eqref{result:WoCondition} and the  characterization of Schauder bases.  However,  the operators $\GG_m$ are neither linear nor continuous. 
We emphasize that, as Wojtaszczyk pointed out in \cite{Wo2000}, the choice of the bijection $\tau$ with respect to which we construct the greedy algorithm $(\GG_{m})_{m=1}^{\infty}$ plays no relevant role in the theory.

Unconditional bases are a special kind of quasi-greedy bases. Although the converse is not true in general, quasi-greedy bases always retain in a certain sense a flavor of unconditionality. For example, they are \textit{unconditional for constant coefficients} \cite{Wo2000}, i.e., there is a constant $C$ (to be precise $C=2 C_{w}$, where $C_{w}$ is the least constant in \eqref{result:WoCondition}, works) such that
\begin{equation}
\label{result:UCCCondition}
 \frac{1}{C}\left\Vert \sum_{j\in A} \xx_{i}\right\Vert\le \left\Vert \sum_{j\in A}\varepsilon_{i} \xx_{i}\right\Vert\le C \left\Vert \sum_{j\in A} \xx_{i}\right\Vert
\end{equation}
for any finite subset $A$ of $J$ and any choice of signs $\varepsilon_j\in\{\pm1\}$.

Before the concept of quasi-greedy basis was introduced in the literature, C\'ordoba and Fern\'andez \cite{CoFe1998} had studied the convergence of decreasing rearranged Fourier series. For $k\in\ZZ$ let us define $\tau_k\colon\RR\to\CC$ by $\tau_k(x)=e^{2\pi k x i}$. Let $1\le p<\infty$ and denote by $q$ its conjugate exponent, determined by
$1/p+1/q=1$. Then, with the usual identification of  $L_p^*(\TT)$ with $L_q(\TT)$, the double sequence $(\tau_k,\tau_{-k})_{k=-\infty}^\infty$ is a normalized bounded biorthogonal system for $L_p(\TT)$. The authors of \cite{CoFe1998} showed that for each $1<p<2$ there is a function $f\in L_p(\TT)$ whose decreasing rearranged Fourier series does not converge, which in our language can be stated as saying that  the trigonometric system $(\tau_k)_{k=-\infty}^\infty$ is not a quasi-greedy basis for $L_p(\TT)$.
Combining the condition characterizing quasi-greedy bases \eqref{result:WoCondition} with \cite{Telmyakov1998}*{Remark~2}, the result extends to the whole range of $p\in[1,\infty]\setminus\{2\}$ (replacing $L_p(\TT)$ with $\Cont(\TT)$ when $p=\infty$).
Wojtaszczyk gave a different proof of this result in \cite{Wo2000} that relies on~\eqref{result:UCCCondition}.

%With the aim of continuing this line of research,
A natural way to continue this line of research is to consider Fourier series with respect to orthonormal bases. Let $(X,\Sigma,\mu)$ be a measure space such that the Hilbert space $L_2(\mu)$ is separable. Let
$(\xx_j)_{j\in J}$ be an orthonormal basis of $L_2(\mu)$.
For $1\le p< \infty$, let $q$ be its conjugate exponent. 
In case that  $\spn \{ \xx_j : j\in J\}$ is dense in $L_p(\mu)$ and
\[
\sup_{j\in J} \Vert \xx_j \Vert_p \Vert \xx_j \Vert_q<\infty,
\]
the identification of $L_p^*(\mu)$ with $L_q(\mu)$, yields that $(\xx_j ,\overline{\xx_j})_{j\in J}$ is a bounded biorthogonal system for $L_p(\mu)$. It therefore makes sense to investigate the convergence of the greedy algorithm with respect to the $L_p(\mu)$-normalized system
\[
\left( \Vert \xx_j \Vert_p^{-1} {\xx_j}, \Vert \xx_j\Vert_p \overline{ \xx_j } \right)_{j\in J}.
\]
Notice that if the measure $\mu$ is finite and the orthonormal basis $(\xx_j)_{j\in J}$
 is \textit{uniformly bounded}, i.e.\ $\sup_{j\in J} \Vert \xx_j\Vert_\infty<\infty$, then it is semi-normalized and bounded in $L_p(\mu)$ for any $1\le p<\infty$.
Nielsen \cite{Nielsen2007} proved that there is an uniformly bounded orthonormal basis of $L_2(\TT)$ which is quasi-greedy for $L_p(\TT)$ for any $1<p<\infty$, thus
exhibiting a behavior opposite to that of the trigonometric system.

In this paper we focus on Jacobi polynomials.
Recall that, for  scalars $\alpha$, $\beta>-1$, the  $L_2(\mu_{\alpha,\beta})$-normalized Jacobi polynomials
$(p_n^{(\alpha,\beta)})_{n=0}^\infty$ appear as the orthonormal polynomials associated to the measure $\mu_{\alpha,\beta}$ given by
\begin{equation}
\label{measureJacobi}
d\mu_{\alpha,\beta}(x)=(1-x)^\alpha (1+x)^\beta \chi_{(-1,1)} (x) \, dx.
\end{equation}
Since polynomials are dense in $L_p(\mu_{\alpha,\beta})$ for any $1\le p<\infty$, Jacobi polynomials of indices $\alpha$ and $\beta$ constitute an orthonormal basis of $L_2(\mu_{\alpha,\beta})$. Our main result on Jacobi polynomials establishes that the greedy algorithm for this kind of orthogonal polynomials follows the same pattern as the greedy algorithm for the trigonometric system.

\begin{theorem}
\label{result:MainTheorem}
Let $1\le p<\infty$ and $\min\{\alpha,\beta\}> -1/2$. The $L_p(\mu_{\alpha,\beta})$-normalized Jacobi polynomials of indices $\alpha$ and $\beta$ form a quasi-greedy basis for $L_p(\mu_{\alpha,\beta})$ if and only if $p=2$.
\end{theorem}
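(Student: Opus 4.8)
The plan is to settle the easy implication at once and devote the real work to its converse. When $p=2$ the polynomials $(p_n^{(\alpha,\beta)})_{n=0}^\infty$ are an orthonormal, hence unconditional, hence quasi-greedy basis of $L_2(\mu_{\alpha,\beta})$, so only the ``only if'' needs an argument. For that I would argue by contradiction: suppose the $L_p(\mu_{\alpha,\beta})$-normalized polynomials $f_n:=p_n^{(\alpha,\beta)}/\|p_n^{(\alpha,\beta)}\|_p$ form a quasi-greedy basis for some $p\neq2$, so that by \eqref{result:UCCCondition} they are unconditional for constant coefficients. Randomizing the signs in \eqref{result:UCCCondition} and invoking the Khintchine and Kahane inequalities in $L_p(\mu_{\alpha,\beta})$ (Fubini reduces these to the scalar case) converts that property into the dimension-free equivalence
\[
\Bigl\|\sum_{n\in A} f_n\Bigr\|_p \;\asymp\; \Bigl\|\Bigl(\sum_{n\in A}|f_n|^2\Bigr)^{1/2}\Bigr\|_p ,
\]
valid for every finite $A\subset\{0,1,2,\dots\}$ with constants independent of $A$. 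The whole proof then reduces to computing, for the initial segments $A_N=\{0,1,\dots,N-1\}$, the growth in $N$ of the ``plain sum'' on the left and of the square function on the right, and to showing that the two rates disagree whenever $p\neq2$.

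For $p>2$ this comparison is clean and needs only one half of the equivalence. On one side, Minkowski's inequality in $L_{p/2}(\mu_{\alpha,\beta})$ together with $\|f_n\|_p=1$ yields the free bound $\bigl\|(\sum_{n<N}|f_n|^2)^{1/2}\bigr\|_p\le N^{1/2}$. On the other side I would bound the plain sum from below by localizing at the endpoint $x=1$ (assume without loss of generality $\alpha=\max\{\alpha,\beta\}$). By the Mehler--Heine asymptotics, on the shrinking Bessel window $\{\,1-x\lesssim N^{-2}\,\}$, whose $\mu_{\alpha,\beta}$-measure is of order $N^{-(2\alpha+2)}$, every $p_n^{(\alpha,\beta)}$ with $n<N$ is positive and comparable to its peak value $n^{\alpha+1/2}$; there is thus no cancellation and $\sum_{n<N}f_n\asymp\sum_{n<N}n^{\alpha+1/2}/\|p_n^{(\alpha,\beta)}\|_p$ on the window. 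Feeding in the classical $L_p$-norm asymptotics of the Jacobi polynomials, which are $\asymp1$ below the critical index $P_\alpha=4(\alpha+1)/(2\alpha+1)$ and $\asymp n^{(\alpha+1/2)-(2\alpha+2)/p}$ above it, and integrating over the window produces $\|\sum_{n<N}f_n\|_p\gtrsim N^{c(p)}$ with an explicit exponent $c(p)>\tfrac12$ for every $p>2$ (indeed $c(p)=(\alpha+\tfrac32)-(2\alpha+2)/p$ for $2<p\le P_\alpha$ and $c(p)=1$ for $p\ge P_\alpha$). Since $N^{c(p)}\not\lesssim N^{1/2}$, the equivalence fails.

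For $1\le p<2$ the two roles are interchanged, and now the square function is the tractable one: the bulk Darboux asymptotics $p_n^{(\alpha,\beta)}(\cos\theta)=k(\theta)\cos(N_n\theta+\gamma)+O(n^{-1})$ on a fixed compact subinterval of $(0,\pi)$, combined with $\sum_{n<N}\cos^2(N_n\theta+\gamma)\sim N/2$, give $\bigl\|(\sum_{n<N}|f_n|^2)^{1/2}\bigr\|_p\gtrsim N^{1/2}$. The main obstacle is the complementary bound $\|\sum_{n<N}f_n\|_p=o(N^{1/2})$, where the genuine harmonic analysis lives. The idea is that summing the Darboux expansion turns $\sum_{n<N}p_n^{(\alpha,\beta)}(\cos\theta)$ into the weight $k(\theta)$ times a Dirichlet-type kernel whose modulus is $\lesssim\min\{N,1/\theta\}$; the vanishing of the Jacobi weight at the endpoints then makes the resulting integral grow only like $N^{(\alpha+3/2)-(2\alpha+2)/p}$ (the far endpoint $x=-1$ contributes a term of the same shape with $\beta$ in place of $\alpha$, dominated since $\beta\le\alpha$), and for $p<2$ this exponent is strictly below $\tfrac12$. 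The delicate point is the uniform control of the error terms in the Darboux and Mehler--Heine expansions across the bulk and both endpoints simultaneously; it is exactly here that the hypothesis $\min\{\alpha,\beta\}>-1/2$ is used, keeping the weight integrable and the asymptotics valid. Once this upper bound is in hand, $N^{1/2}\lesssim\|\sum_{n<N}f_n\|_p=o(N^{1/2})$ is absurd, so the equivalence fails for $1\le p<2$ as well, completing the proof.
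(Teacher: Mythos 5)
Your overall strategy is the same as the paper's: pass from quasi-greediness to unconditionality for constant coefficients \eqref{result:UCCCondition}, upgrade it by randomization, Fubini and Khintchine to the equivalence $\Vert\sum_{n\in A}f_n\Vert_p\approx\Vert(\sum_{n\in A}|f_n|^2)^{1/2}\Vert_p$, and then produce finite sets of Jacobi polynomials on which the two sides grow at different rates when $p\neq 2$. Your $p>2$ half is sound: the Minkowski bound $N^{1/2}$ for the square function and the endpoint-window lower bound for the plain sum use only positivity on the window and the norm asymptotics \eqref{result:JacobiBasisNorm}, and positivity makes the varying normalizing factors $1/\Vert p_n^{(\alpha,\beta)}\Vert_p$ harmless. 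The $1\le p<2$ half, however, contains a genuine gap. The quantity you must show is $o(N^{1/2})$ is $\Vert\sum_{n<N}f_n\Vert_p$ with $f_n=p_n^{(\alpha,\beta)}/\Vert p_n^{(\alpha,\beta)}\Vert_p$, but your Dirichlet-kernel computation bounds $\Vert\sum_{n<N}n^{1/2}P_n^{(\alpha,\beta)}\Vert_p$: you silently replace $f_n$ by $p_n^{(\alpha,\beta)}$ and then by the Darboux main terms. These sums differ term by term by positive factors which are bounded above and below but \emph{not} constant, and a cancellation estimate does not survive such a substitution. For instance, in $L_1(\TT)$ the Dirichlet kernel $\sum_{n<N}e^{2\pi in\theta}$ has norm $\approx\log N$, yet multiplying its coefficients by a suitably chosen sequence taking only the values $1$ and $3/2$ produces a sum of $L_1$-norm $\gtrsim N^{1/2}$. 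So the step ``$|\sum_{n<N}f_n(\cos\theta)|\lesssim k(\theta)\min\{N,1/\theta\}$'' does not follow from what you have established.

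This is precisely the gap the paper closes with Wojtaszczyk's perturbation principle (Theorem~\ref{result:Perturbing}): quasi-greediness of the $L_p$-normalized system transfers to the system $(n^{1/2}P_n^{(\alpha,\beta)})_{n=0}^\infty$, whose coefficients are exactly the ones to which Darboux's formula applies, and the randomized equivalence is then invoked for that system, never for $(f_n)$. Inserting this one step (or, alternatively, proving that $d_n/(n^{1/2}\Vert p_n^{(\alpha,\beta)}\Vert_p)$ has uniformly bounded variation and Abel-summing) repairs your argument. Two further points. First, your choice of initial segments $\{0,\dots,N-1\}$ forces a multi-scale treatment of the error terms, since $E_n$ is controlled only for $\theta\gtrsim 1/n$ and your indices run over all scales; you flag this as ``the delicate point'' but leave it undone, whereas the paper's test sets $A_N=\{N+2n:0\le n\le N-1\}$ in Proposition~\ref{result:CCJacobiNorm} remove the difficulty by design: all indices are comparable to $N$, so one window and one error bound serve uniformly, and the common parity turns the cosine sum into an explicit geometric sum, giving a two-sided estimate $\approx N^\omega$ that settles $p<2$ and $p>2$ at once. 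Second, a small slip: for $p<2$ the contribution of the endpoint $x=-1$, with exponent $(\beta+3/2)-(2\beta+2)/p$, actually \emph{dominates} the one from $x=1$ when $\beta\le\alpha$, because $\gamma\mapsto(\gamma+3/2)-(2\gamma+2)/p$ is decreasing for $p<2$; this does not hurt you, since both exponents are below $1/2$, but your justification ``dominated since $\beta\le\alpha$'' is backwards.
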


Section~\ref{Jacobi} is devoted to prove Theorem~\ref{result:MainTheorem}.
% although we have kept apart in Section~\ref{proof} the proof of the most technical result in Section~\ref{Jacobi} 
% to facilitate the readability of the paper.
Before, in Section~\ref{UnconditionalLp} we develop
the functional analysis machinery that we will need in order to do that and 
we show the following result on unconditional bases in $L_p(\mu)$-spaces.

\begin{theorem}
\label{NotSimultaneouslyBounded}Let $\mu$ be a finite measure and $p\in(1,\infty)\setminus\{2\}$. Suppose that $(\xx_j)_{j\in J}$ is a semi-normalized unconditional basis of a non-Hilbertian Banach space $\XX\subseteq L_p(\mu)$. Suppose also that $\XX$ is complemented in $L_p(\mu)$. Then
\begin{itemize}
\item[(i)] $\limsup_{j\in J} \Vert \xx_j\Vert_q=\infty$ for any $p<q$, and
\item[(ii)] $\liminf_{j\in J} \Vert \xx_j\Vert_q=0$ whenever $\max\{q,2\}<p$.
\end{itemize}
\end{theorem}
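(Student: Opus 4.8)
The plan is to reduce everything to a single assertion: for $p>2$, a semi-normalized unconditional basis of a subspace of $L_p(\mu)$ satisfying $\inf_{j}\Vert \xx_j\Vert_2>0$ must be equivalent to the unit vector basis of $\ell_2$, so that its closed span is Hilbertian. Since $\XX$ is non-Hilbertian, both (i) and (ii) will then follow by contradiction. The tool underlying everything is the square-function estimate for unconditional basic sequences in $L_p$: combining the unconditionality of the basis with Khintchine's inequality applied pointwise and Fubini's theorem, one gets a constant $C$, depending only on $p$ and the unconditionality constant $K$, with
\[
C^{-1}\Big\Vert \Big(\sum_j |a_j|^2 |\xx_j|^2\Big)^{1/2}\Big\Vert_p\le \Big\Vert \sum_j a_j \xx_j\Big\Vert_p\le C\Big\Vert \Big(\sum_j |a_j|^2 |\xx_j|^2\Big)^{1/2}\Big\Vert_p
\]
for all finitely supported scalars $(a_j)$. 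I record from the start that the norm of $\XX$ is the restriction of $\Vert\cdot\Vert_p$, that complementation will be used decisively only in the final duality step, and that, $\mu$ being finite, $\Vert f\Vert_r\le\mu(X)^{1/r-1/s}\Vert f\Vert_s$ whenever $r\le s$; these comparisons will be used freely.

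First I would settle the core claim for $p>2$. Writing $b_j=|a_j|^2$ and $r=p/2>1$, the estimate above turns the problem into comparing $\big\Vert\sum_j b_j|\xx_j|^2\big\Vert_{r}$ with $\sum_j b_j$. The upper bound $\big\Vert\sum_j b_j|\xx_j|^2\big\Vert_{r}\le \sum_j b_j\Vert\xx_j\Vert_p^2\lesssim \sum_j b_j$ is immediate from the triangle inequality in $L_{r}$ (valid since $r>1$) and semi-normalization, and needs no hypothesis beyond $p>2$. For the reverse bound I would pass to $L_1$ using the finite measure, $\big\Vert\sum_j b_j|\xx_j|^2\big\Vert_{r}\gtrsim\big\Vert\sum_j b_j|\xx_j|^2\big\Vert_{1}=\sum_j b_j\Vert\xx_j\Vert_2^2$, so that the extra assumption $\inf_j\Vert\xx_j\Vert_2>0$ gives $\gtrsim\sum_j b_j$. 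Together these yield $\big\Vert\sum_j a_j\xx_j\big\Vert_p\approx(\sum_j|a_j|^2)^{1/2}$, i.e.\ the basis is equivalent to the $\ell_2$-basis and $\XX\cong\ell_2$, contradicting the non-Hilbertian hypothesis.

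With the core claim in hand, parts (i) and (ii) for $p>2$ reduce to showing that the respective contrary assumptions force $\inf_j\Vert\xx_j\Vert_2>0$. For (ii), assume $\inf_j\Vert\xx_j\Vert_q=\delta>0$ with $q<p$: if $q\le 2$ then $\Vert\xx_j\Vert_2\gtrsim\Vert\xx_j\Vert_q\ge\delta$ directly from the finite measure, while if $2<q<p$ the interpolation inequality $\Vert\xx_j\Vert_q\le\Vert\xx_j\Vert_2^{1-\theta}\Vert\xx_j\Vert_p^\theta$ together with $\Vert\xx_j\Vert_p\lesssim1$ bounds $\Vert\xx_j\Vert_2$ below; either way the core claim applies. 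For (i) with $p>2$, assume $\sup_j\Vert\xx_j\Vert_q=M<\infty$ with $q>p$; interpolating $L_p$ between $L_2$ and $L_q$ as $\Vert\xx_j\Vert_p\le\Vert\xx_j\Vert_2^{1-\theta}\Vert\xx_j\Vert_q^\theta$ and using $\Vert\xx_j\Vert_p\gtrsim1$ together with $\Vert\xx_j\Vert_q\le M$ again forces $\inf_j\Vert\xx_j\Vert_2>0$, and the core claim gives a contradiction.

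The remaining, and main, difficulty is part (i) in the range $1<p<2$, where $p/2<1$ and the triangle-inequality step above breaks down; here I would argue by duality, converting the problem into an instance of part (ii) in $L_{p'}$ with $p'>2$. Since $\XX$ is complemented in $L_p(\mu)$ by a projection $P$, the functionals $\xx_j^*$ are realized through $P$ as functions $g_j\in L_{p'}(\mu)$ with $\int g_j\overline{\xx_k}\,d\mu=\delta_{jk}$, and $(g_j)$ is a semi-normalized unconditional basis of the complemented, non-Hilbertian subspace $P^*(\XX^*)\subseteq L_{p'}(\mu)$. Assuming for contradiction that $\sup_j\Vert\xx_j\Vert_q=M<\infty$ for some $q>p$, Hölder's inequality gives $1=|\int g_j\overline{\xx_j}\,d\mu|\le\Vert g_j\Vert_{q'}\Vert\xx_j\Vert_q$, whence $\inf_j\Vert g_j\Vert_{q'}\ge 1/M>0$. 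Since $q>p$ and $p<2$ translate into $q'<p'$ and $p'>2$, so that $\max\{q',2\}<p'$, part (ii) applied to $(g_j)$ in $L_{p'}(\mu)$ yields $\liminf_j\Vert g_j\Vert_{q'}=0$, a contradiction. The step I expect to require the most care is precisely this duality bookkeeping, namely verifying that $(g_j)$ inherits semi-normalization and unconditionality and that $P^*(\XX^*)$ is genuinely non-Hilbertian and complemented, so that part (ii) is legitimately applicable.
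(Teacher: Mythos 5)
Your proposal is correct, but it takes a genuinely different route from the paper's. The paper runs everything through democracy functions and a Kadec--Pe{\l}czy{\'n}ski subbasis theorem: after normalizing in $L_q$, Lemma~\ref{result:DemocracyEstimates} bounds the upper democracy function by $N^{1/q}$, while the non-Hilbertian and complemented hypotheses are used in Theorem~\ref{result:KaPeSubbasis} (via the classification of separable $L_p(\mu)$-spaces and the Bessaga--Pe{\l}czy{\'n}ski selection principle) to extract a subbasis equivalent to the unit vector basis of $\ell_p$, forcing the contradiction $N^{1/p}\lesssim N^{1/q}$; the range $2<p$ is then settled by a dual argument plus Lemma~\ref{result:EquivalentNorms}. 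You avoid the subbasis theorem altogether: for $p>2$ your square-function computation (upper bound by the triangle inequality in $L_{p/2}$, lower bound by passing to $L_1$ on the finite measure space) shows that the contrary hypotheses would make $(\xx_j)$ equivalent to the unit vector basis of $\ell_2$, so $\XX$ would be Hilbertian --- and, notably, this part needs no complementation at all; for part (i) with $1<p<2$ you dualize, using the projection $P$ to transplant the biorthogonal functionals into $L_{p'}(\mu)$ with $p'>2$ and invoking part (ii) there. Your route is more self-contained and it isolates exactly where complementation enters (only in the duality step for $p<2$), whereas the paper's route yields stronger structural information (an $\ell_p$-subbasis) and its lemmas, being formulated for quasi-greedy bases, are reused elsewhere (e.g.\ in Proposition~\ref{result:NotBoundedOne} and the Jacobi application). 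Two points you leave implicit are routine but worth recording: passing from the $\liminf$/$\limsup$ hypotheses to uniform bounds by discarding finitely many basis elements (the remaining closed span is of finite codimension, hence still non-Hilbertian), and the fact that $(\xx_j^*)$ is genuinely an unconditional basis of $\XX^*$ --- this holds because $L_p(\mu)$, hence $\XX$, is reflexive, so the basis is shrinking; with that observation your duality bookkeeping closes without difficulty.
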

Notice that Theorem~\ref{NotSimultaneouslyBounded} is relevant for its intrinsic importance within the framework of the theory of bases. Firstly, it extends to any $q$ a result that Kadets and Pe{\l}czy{\'n}ski proved only for $q=2$ (see \cite{KadPel1962}*{Corollary 9}). Secondly, it  generalizes the main result of Gapo\v{s}kin in \cite{Gaposkin1958}, where he shows that no normalized unconditional basis in
$L_p[0,1]$ can be uniformly bounded. Lastly, for finite measures, Theorem~\ref{NotSimultaneouslyBounded} overrides a recent result of the first two authors that says that if $\mu$ is a nonpurely atomic measure then there is no basis $\BB$ that is simultaneously greedy (see the definition below) in two different $L_{p}(\mu)$ spaces, $1<p<\infty$ \cite{AA2015}*{Theorem~4.4}.

We end this preliminary section by singling out some notation and terminology that will be used heavily throughout.
Given families of positive real numbers $(\alpha_i)_{i\in I}$ and $(\beta_i)_{i\in I}$, the symbol $\alpha_i\lesssim \beta_i$ for $i\in I$ means that
$\sup_{i\in I}\alpha_i/\beta_i <\infty$, while $\alpha_i\approx \beta_i$ for $i\in I$ means that $\alpha_i\lesssim \beta_i$ and $\beta_i\lesssim \alpha_i$ for $i\in I$.

A basis $\BB=(\xx_j)_{j\in J}$ in a Banach space $\XX$ is said to be \textit{democratic} if
there is a constant $D\ge 1$ such that
\[
\left\Vert \sum_{j\in A}\xx_j \right\Vert \le D \left\Vert \sum_{j\in B} \xx_j \right\Vert
\]
whenever $A$ and $B$ are finite subsets of $J$ with $|A|=|B|$.
To quantify the democracy of a basis $\BB$ we consider the \textit{upper democracy function} of~$\BB$ (also known as the \textit{fundamental function of~$\BB$}) given by
\[
 \varphi_{u}[\BB,\XX](N)=\sup_{|A|\le N}\left\Vert \sum_{j\in A}\xx_j\right\Vert, \qquad N\in\NN,
\]
and the \textit{lower democracy function} of $\mathcal B$ in $\XX$, defined as
\[
 \varphi_{l}[\BB,\XX](N)=\inf_{|A|\ge N}\left\Vert \sum_{j\in A}\xx_j\right\Vert, \qquad N\in\NN.
\]
A quasi-greedy basis $\mathcal B$ is democratic if and only if $\varphi_{u}[\BB,\XX](N)\approx \varphi_{l}[\BB,\XX](N)$ for $N\in\NN$.

A basis $(\xx_n)_{n=1}^\infty$ is said to be \textit{almost greedy} if there is a constant $C\ge 1$ such that
\[
 \Vert x-\GG_m(x)\Vert \le C \inf\left\{\left \Vert x-\sum_{j\in A} \xx_j^*(x) \xx_j\right\Vert : |A|=m\right\}
\]
for all $m\in \NN$ and $x\in \XX$. 
Dilworth et al.\ \cite{DKKT2003} characterized almost greedy basis as those bases that are simultaneously quasi-greedy and democratic.

Finally, the best one can hope for in regards to the greedy algorithm is the existence of a constant $C\ge 1$ such that
\[
 \Vert x-\GG_m(x)\Vert \le C \inf\left\{\left \Vert x-\sum_{j\in A} a_{j} \xx_j\right\Vert : |A|=m,\; (a_{j})_{j\in A}\;\;\text{scalars}\right\}
 \]
for all $m\in \NN$ and $x\in \XX$. If this is the case, the basis is called \textit{greedy}. Konyagin and Temlyakov \cite{KonyaginTemlyakov1999} characterized greedy bases as those bases that are unconditional and democratic.

If necessary, the reader will find more background on Banach space theory in \cite{AlbiacKalton2006} 
and on orthogonal polynomials  in~\cite{Szego}. 

% -----------------------
\section{Quasi-greedy and unconditional bases in $L_p(\mu)$-spaces}
\label{UnconditionalLp}
% -----------------------

\noindent We start generalizing to quasi-greedy bases a fact which is standard for unconditional bases in $L_p(\mu)$-spaces.

\begin{lemma}
\label{result:Random}
Let $(X,\Sigma,\mu)$ be a finite measure space. Let $1\le p<\infty$ and $(\xx_j)_{j\in J}$ be a quasi-greedy basis for
a separable subspace of $L_p(\mu)$. Then for $A\subseteq J$ finite,
\[
\left\Vert \sum_{j\in A}\xx_j \right\Vert_p
\approx \left\Vert \left(\sum_{j\in A} |\xx_j|^2\right)^{1/2}\right\Vert_p.
\]
\end{lemma}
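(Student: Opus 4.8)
The plan is to deduce the square-function equivalence from the unconditionality for constant coefficients \eqref{result:UCCCondition}, which every quasi-greedy basis enjoys, together with Khintchine's inequality. The only structural property of $\BB$ I shall invoke is that there is a constant $C\ge1$ with
\[
\frac{1}{C}\left\Vert \sum_{j\in A}\xx_j\right\Vert_p \le \left\Vert \sum_{j\in A}\varepsilon_j\xx_j\right\Vert_p \le C\left\Vert \sum_{j\in A}\xx_j\right\Vert_p
\]
for every finite $A\subseteq J$ and every $(\varepsilon_j)_{j\in A}\in\{\pm1\}^A$. This is precisely the input used in the classical proof that unconditional bases in $L_p$ satisfy a square-function estimate; the decisive observation is that the standard argument never needs the full unconditional multiplier boundedness, but only sign changes applied to \emph{unit} coefficients, so it transfers verbatim once \eqref{result:UCCCondition} is available.

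First I would fix a probability space $(\Omega,\nu)$ carrying a Rademacher system $(r_j)_{j\in J}$, i.e.\ a sequence of independent random signs. For each fixed $x\in X$, applying Khintchine's inequality to the scalars $(\xx_j(x))_{j\in A}$ gives
\[
\int_\Omega \left|\sum_{j\in A} r_j(\omega)\,\xx_j(x)\right|^p d\nu(\omega) \approx \left(\sum_{j\in A}|\xx_j(x)|^2\right)^{p/2},
\]
with implicit constants depending only on $p$. Integrating this in $x$ against $\mu$ and interchanging the order of integration via Fubini's theorem (legitimate since the integrand is nonnegative and each $\xx_j\in L_p(\mu)$) yields
\[
\int_\Omega \left\Vert \sum_{j\in A} r_j(\omega)\,\xx_j\right\Vert_p^p d\nu(\omega) \approx \left\Vert \left(\sum_{j\in A}|\xx_j|^2\right)^{1/2}\right\Vert_p^p.
\]

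Next I would feed in \eqref{result:UCCCondition}. For $\nu$-almost every $\omega$ the vector $(r_j(\omega))_{j\in A}$ is a choice of signs in $\{\pm1\}^A$, so the displayed two-sided estimate gives $\bigl\Vert \sum_{j\in A} r_j(\omega)\xx_j\bigr\Vert_p \approx \bigl\Vert \sum_{j\in A}\xx_j\bigr\Vert_p$ uniformly in $\omega$. Substituting this into the integral over $\Omega$ shows that the left-hand side is comparable to $\bigl\Vert \sum_{j\in A}\xx_j\bigr\Vert_p^p$, and taking $p$-th roots delivers the asserted equivalence.

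I do not foresee a genuine obstacle: the finiteness of $\mu$ serves only to place the $\xx_j$ in $L_p(\mu)$ and to license Fubini, while the essential mechanism is that the Rademacher signs realize exactly the unit-coefficient sign changes governed by \eqref{result:UCCCondition}. The single point warranting care is that Khintchine's inequality must be used across the full range $0<p<\infty$ (in particular at $p=1$), so that both halves of $\approx$ persist; the lower Khintchine bound is what supplies the nontrivial lower estimate $\bigl\Vert \sum_{j\in A}\xx_j\bigr\Vert_p \lesssim \bigl\Vert (\sum_{j\in A}|\xx_j|^2)^{1/2}\bigr\Vert_p$.
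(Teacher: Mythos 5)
Your proposal is correct and is essentially the paper's own proof: both arguments combine the unconditionality-for-constant-coefficients property \eqref{result:UCCCondition} of quasi-greedy bases with Fubini's theorem and Khintchine's inequality (valid for all $0<p<\infty$) applied to a Rademacher average of $\sum_{j\in A}\varepsilon_j\xx_j$. The only difference is expository --- you apply Khintchine pointwise first and invoke \eqref{result:UCCCondition} last, while the paper runs the same three ingredients in the opposite order.
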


\begin{proof}
%It suffices to reproduce standard average argument which works for unconditional bases.
Let $(\varepsilon_j)_{j\in J}$ be a Rademacher family defined on some probability space $(\Omega,P)$, and $A\subseteq J$ finite. Combining \eqref{result:UCCCondition}, Fubini's
theorem, and Khintchine's inequality yields 
\begin{align*}
\left\Vert \sum_{j\in A}\xx_j \right\Vert_p
&\approx \left( \int_\Omega \Big\Vert \sum_{j\in A}\varepsilon_j \xx_j \Big\Vert_p^p\, dP\right)^{1/p}\\
&= \left(\int_X \int_\Omega \Big| \sum_{j\in A} \varepsilon_j \xx_j\Big|^p \, dP \,d\mu\right)^{1/p}\\
&\approx \left\Vert \left(\sum_{j\in A} |\xx_j|^2\right)^{1/2}\right\Vert_p.
\qedhere
\end{align*}
\end{proof}

Our next auxiliary result displays an estimate that is implied when a family of functions is simultaneously seminormalized in two different $L_{p}$ spaces.

\begin{lemma}
\label{result:PreviousDemocracyEstimates} 
Let $1\le p< q\le 2$ (respectively, $2\le q <p\le\infty$) and let $(f_j)_{j\in J}$ be a family of measurable functions defined on a finite measure space $(X,\Sigma,\mu)$. Suppose that
$\Vert f_j\Vert_p\approx \Vert f_j \Vert_q\approx 1$ for $j\in J$. Then, for $A\subseteq J$ finite,
\[
|A|^{1/2} \lesssim \left\Vert \left( \sum_{j\in A}| f_j|^2\right)^{1/2}\right\Vert_p \lesssim |A|^{1/q}
\]
(respectively,
\[
|A|^{1/q} \lesssim \left\Vert \left( \sum_{j\in A}| f_j|^2\right)^{1/2}\right\Vert_p \lesssim |A|^{1/2}).
\]
\end{lemma}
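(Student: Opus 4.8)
The plan is to treat both regimes in parallel, reducing each of the four one-sided estimates to one of three soft tools applied to the square function $S:=\left(\sum_{j\in A}|f_j|^2\right)^{1/2}$: the pointwise sub-/super-additivity of $t\mapsto t^{q/2}$, Minkowski's inequality in $L_{p/2}$, and the finite-measure comparison $\Vert g\Vert_r\le\mu(X)^{1/r-1/s}\Vert g\Vert_s$ valid for $r\le s$. The two-norm hypothesis $\Vert f_j\Vert_p\approx\Vert f_j\Vert_q\approx 1$ will enter only through the sums $\sum_{j\in A}\Vert f_j\Vert_r^{\,r}\approx|A|$ for $r\in\{p,q\}$, so the whole argument is really about how the $\ell_2$-square function interacts with these two exponents.

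Three of the four inequalities should come out immediately. For the upper estimate in the range $q\le 2$, sub-additivity of $t\mapsto t^{q/2}$ gives $S^q\le\sum_{j\in A}|f_j|^q$ pointwise, hence $\Vert S\Vert_q^q\le\sum_{j\in A}\Vert f_j\Vert_q^q\approx|A|$, and then $\Vert S\Vert_p\lesssim\Vert S\Vert_q\lesssim|A|^{1/q}$ by finite-measure monotonicity (here $p\le q$). Dually, in the range $q\ge 2$, super-additivity gives $S^q\ge\sum_{j\in A}|f_j|^q$, so $\Vert S\Vert_q\gtrsim|A|^{1/q}$ and $\Vert S\Vert_p\gtrsim\Vert S\Vert_q\gtrsim|A|^{1/q}$ (now $p\ge q$). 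Finally, the upper estimate in the range $q\ge 2$ follows from
\[
\Vert S\Vert_p^2=\Big\Vert \sum_{j\in A}|f_j|^2\Big\Vert_{p/2}\le\sum_{j\in A}\big\Vert |f_j|^2\big\Vert_{p/2}=\sum_{j\in A}\Vert f_j\Vert_p^2\approx|A|,
\]
where Minkowski's inequality is legitimate because $p/2\ge q/2\ge 1$ (the case $p=\infty$ being identical, with the triangle inequality in $L_\infty$).

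The one genuinely different estimate, and the main obstacle, is the lower bound $|A|^{1/2}\lesssim\Vert S\Vert_p$ in the regime $1\le p<q\le 2$, because here every naive comparison pushes the wrong way: on a finite measure space $\Vert S\Vert_p\le\Vert S\Vert_2$, so one cannot simply feed in $\Vert S\Vert_2\gtrsim|A|^{1/2}$. Instead I would descend to $L_1$. The pointwise Cauchy--Schwarz inequality $\sum_{j\in A}|f_j|\le|A|^{1/2}\,S$, integrated and combined with $\Vert\cdot\Vert_1\lesssim\Vert\cdot\Vert_p$, yields
\[
\sum_{j\in A}\Vert f_j\Vert_1\le|A|^{1/2}\,\Vert S\Vert_1\lesssim|A|^{1/2}\,\Vert S\Vert_p .
\]
It then remains to show $\Vert f_j\Vert_1\gtrsim 1$, which is the crux: it does \emph{not} follow from $\Vert f_j\Vert_p\approx 1$ alone, but it does follow from the two-norm hypothesis by log-convexity (Lyapunov's inequality). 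Writing $1/p=\theta+(1-\theta)/q$ with $\theta\in(0,1]$ gives $\Vert f_j\Vert_p\le\Vert f_j\Vert_1^{\theta}\,\Vert f_j\Vert_q^{1-\theta}$, so $\Vert f_j\Vert_1\gtrsim\big(\Vert f_j\Vert_p\,\Vert f_j\Vert_q^{-(1-\theta)}\big)^{1/\theta}\approx 1$. Hence $\sum_{j\in A}\Vert f_j\Vert_1\approx|A|$ and $\Vert S\Vert_p\gtrsim|A|^{1/2}$, completing the regime. The only point to watch throughout is that the implicit constants depend solely on $p$, $q$ and $\mu(X)$, which is the case at every step.
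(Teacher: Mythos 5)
Your proof is correct, and three of your four estimates coincide with the paper's argument: the paper spells out only the regime $1\le p<q\le 2$ and dismisses $2\le q<p\le\infty$ as ``dual'', and your treatment of that dual regime (super-additivity of $t\mapsto t^{q/2}$ for the lower bound, Minkowski's inequality in $L_{p/2}$ for the upper bound) is exactly the intended dual argument, which you have the merit of writing out. The genuine divergence is the lower bound $|A|^{1/2}\lesssim \Vert S\Vert_p$ in the regime $1\le p<q\le 2$, where $S=\bigl(\sum_{j\in A}|f_j|^2\bigr)^{1/2}$. The paper gets it in one line from the \emph{reverse} Minkowski inequality in the quasi-normed space $L_{r}(\mu)$, $r=p/2\le 1$ (for nonnegative functions, $\Vert\sum_j g_j\Vert_r\ge\sum_j\Vert g_j\Vert_r$):
\[
\Vert S\Vert_p^2=\Big\Vert\sum_{j\in A}|f_j|^2\Big\Vert_{p/2}\ge\sum_{j\in A}\big\Vert\,|f_j|^2\,\big\Vert_{p/2}=\sum_{j\in A}\Vert f_j\Vert_p^2\approx|A|,
\]
which notably uses only the hypothesis $\Vert f_j\Vert_p\approx 1$. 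Your route --- pointwise Cauchy--Schwarz $\sum_{j\in A}|f_j|\le|A|^{1/2}S$ followed by Lyapunov interpolation to secure $\Vert f_j\Vert_1\gtrsim 1$ --- is equally valid, and it buys you an argument that stays entirely within honest norms (no exponents below $1$), at the price of invoking the second normalization $\Vert f_j\Vert_q\approx 1$ in this step. One small correction of emphasis: your remark that the crux ``does not follow from $\Vert f_j\Vert_p\approx 1$ alone'' is true of the intermediate claim $\Vert f_j\Vert_1\gtrsim 1$, but the square-function lower bound itself \emph{does} follow from $\Vert f_j\Vert_p\approx 1$ alone, as the paper's reverse-Minkowski argument shows; in this regime the two-norm hypothesis is genuinely needed only for the upper bound.
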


\begin{proof}
Assume $1\le p<q\le 2$. Using the embeddings $\ell_q\subseteq\ell_2$ and $L_q(\mu)\subseteq L_p(\mu)$,
\begin{equation*}
\left\Vert \left(\sum_{j\in A} | f_j|^2\right)^{1/2}\right\Vert_p
\lesssim \left\Vert \left(\sum_{j\in A} | f_j|^q\right)^{1/q}\right\Vert_q
=\left(\sum_{j\in A} \Vert f_j\Vert_q^q\right)^{1/q}
\approx |A|^{1/q}.
\end{equation*}
Let $r=p/2<1$. Using that $\Vert f+g\Vert_r \ge \Vert f\Vert_r +\Vert g\Vert_r$ whenever $f$ and $g$ are measurable positive functions,
\begin{align*}
 \left\Vert \left(\sum_{j\in A} | f_j|^2\right)^{1/2}\right\Vert_p
&= \left\Vert \sum_{j\in A} | f_j|^2\right\Vert_r^{1/2}\\
&\ge \left( \sum_{j\in A} \Vert \,  |f_j|^2 \, \Vert_r \right)^{1/2}\\
&=\left( \sum_{j\in A} \Vert f_j \Vert_p^2 \right)^{1/2}\\
&\approx |A|^{1/2}.
\end{align*}

The case $2\le q<p\le\infty$ follows from a ``dual'' argument.
\end{proof}

\begin{lemma}
\label{result:DemocracyEstimates}
Let $(X,\Sigma,\mu)$ be a finite measure space. Suppose $1\le p< q\le 2$ (respectively, $2\le q <p<\infty$). Let  $(\xx_j)_{j\in J}$ be a quasi-greedy basis for
a separable subspace $\XX$ of $L_p(\mu)$ such that$\Vert \xx_j \Vert_q\approx 1$ for $j\in J$. Then, for $N\in\NN$,
\[
N^{1/2} \lesssim \varphi_{l}[\BB,\XX](N) \le \varphi_{u}[\BB,\XX](N) \lesssim N^{1/q}
\]
(respectively,
\[
N^{1/q} \lesssim \varphi_{l}[\BB,\XX](N) \le \varphi_{u}[\BB,\XX](N) \lesssim N^{1/2}).
\]
\end{lemma}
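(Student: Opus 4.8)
The plan is to feed the two preceding lemmas into one another and then read off the democracy functions by taking suprema and infima over sets of a prescribed size. The whole argument is a bookkeeping exercise once the hypotheses are aligned.

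First I would record that, since $\BB=(\xx_j)_{j\in J}$ is a quasi-greedy basis of a subspace $\XX\subseteq L_p(\mu)$, it is in particular semi-normalized, so that $\Vert\xx_j\Vert_p\approx 1$ for $j\in J$. Combined with the standing hypothesis $\Vert\xx_j\Vert_q\approx 1$, this shows that the family $(f_j)_{j\in J}:=(\xx_j)_{j\in J}$ satisfies the hypotheses of Lemma~\ref{result:PreviousDemocracyEstimates}. This step is where care is needed, and it is the only genuine subtlety: the finiteness of $\mu$ already yields one of the two norm comparisons (namely $\Vert\xx_j\Vert_p\lesssim\Vert\xx_j\Vert_q$ when $p<q$, via $L_q(\mu)\subseteq L_p(\mu)$), but \emph{not} its reverse, so the semi-normalization in $L_p$ really is what guarantees $\Vert\xx_j\Vert_p\approx 1$.

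Next, for a finite set $A\subseteq J$ I would chain Lemma~\ref{result:Random}, which gives
\[
\Big\Vert \sum_{j\in A}\xx_j\Big\Vert_p\approx \Big\Vert \Big(\sum_{j\in A}|\xx_j|^2\Big)^{1/2}\Big\Vert_p,
\]
with the square-function estimate of Lemma~\ref{result:PreviousDemocracyEstimates}. In the case $1\le p<q\le 2$ this produces the two-sided bound
\[
|A|^{1/2}\lesssim \Big\Vert \sum_{j\in A}\xx_j\Big\Vert_p\lesssim |A|^{1/q},
\]
with implicit constants independent of $A$, and the analogous bound with the roles of $1/2$ and $1/q$ interchanged in the case $2\le q<p<\infty$.

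Finally, passing to the democracy functions is routine. Since $t\mapsto t^{1/q}$ and $t\mapsto t^{1/2}$ are increasing, taking the supremum over $|A|\le N$ in the upper estimate yields $\varphi_{u}[\BB,\XX](N)\lesssim N^{1/q}$, while taking the infimum over $|A|\ge N$ in the lower estimate yields $\varphi_{l}[\BB,\XX](N)\gtrsim N^{1/2}$; the intermediate inequality $\varphi_{l}[\BB,\XX](N)\le\varphi_{u}[\BB,\XX](N)$ follows by testing both functions against any $A\subseteq J$ with $|A|=N$, which exists because $\XX$ is infinite-dimensional and hence $J$ is infinite. The case $2\le q<p<\infty$ is handled identically, and I do not anticipate any further obstacle beyond the hypothesis-matching noted above.
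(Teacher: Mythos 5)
Your proof is correct and takes essentially the same approach as the paper's: note that quasi-greediness forces semi-normalization so that $\Vert \xx_j\Vert_p\approx 1$, then combine Lemma~\ref{result:Random} with Lemma~\ref{result:PreviousDemocracyEstimates}, and read off the bounds on $\varphi_l$ and $\varphi_u$. The paper leaves the final passage to the democracy functions implicit; you simply spell out that bookkeeping step.
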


\begin{proof} 
Quasi-greedy bases are semi-normalized, so $\Vert \xx_j \Vert_p\approx 1$ for $j \in J$. 
Then, just put together Lemma~\ref{result:Random} and Lemma~\ref{result:PreviousDemocracyEstimates}.
\end{proof}

The next two propositions are on-the-spot corollaries of Lemma~\ref{result:PreviousDemocracyEstimates} and Lemma~\ref{result:DemocracyEstimates}, respectively. We point out that a similar statement to Proposition~\ref{result:Democracy} with the stronger assumption that the basis be uniformly bounded was obtained by Dilworth et al.\ \cite{DST2012}*{Proposition~2.17}.

\begin{proposition}
\label{result:LdosSumEstimate} 
Let $1\le p\le \infty$. Suppose $(f_j)_{j\in J}$ is a family of measurable functions defined on a finite measure space $(X,\Sigma,\mu)$ such that
$\Vert f_j\Vert_p\approx \Vert f_j \Vert_2\approx 1$ for $j\in J$. Then for $A\subseteq J$ finite,
\[
 \left\Vert \left( \sum_{j\in A}| f_j|^2\right)^{1/2}\right\Vert_p \approx |A|^{1/2}.
\]
\end{proposition}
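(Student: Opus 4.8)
The plan is to derive this directly from Lemma~\ref{result:PreviousDemocracyEstimates} by specializing $q=2$, supplemented by an elementary computation in the borderline case $p=2$. The key observation is that the two flanking exponents $|A|^{1/2}$ and $|A|^{1/q}$ appearing in Lemma~\ref{result:PreviousDemocracyEstimates} coincide precisely when $q=2$; thus the two one-sided estimates furnished by that lemma pinch $\left\Vert \left( \sum_{j\in A}| f_j|^2\right)^{1/2}\right\Vert_p$ from both sides to the single order $|A|^{1/2}$. I would therefore split the argument into three cases according to the position of $p$ relative to $2$.

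When $p=2$ the lemma does not apply (its hypotheses force $p\neq q$), but the estimate is immediate. Interchanging the finite sum with the integral and using the normalization $\Vert f_j\Vert_2\approx 1$,
\[
\left\Vert \left( \sum_{j\in A}| f_j|^2\right)^{1/2}\right\Vert_2^2
=\int_X \sum_{j\in A} |f_j|^2\, d\mu
=\sum_{j\in A}\Vert f_j\Vert_2^2
\approx |A|,
\]
so that the norm is $\approx |A|^{1/2}$, as desired.

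For $1\le p<2$ I would invoke the first regime of Lemma~\ref{result:PreviousDemocracyEstimates} with $q=2$: the hypothesis $\Vert f_j\Vert_p\approx\Vert f_j\Vert_2\approx 1$ is exactly what we are given, and the lemma then returns $|A|^{1/2}\lesssim \left\Vert \left( \sum_{j\in A}| f_j|^2\right)^{1/2}\right\Vert_p\lesssim |A|^{1/2}$. Symmetrically, for $2<p\le\infty$ I would apply the ``respectively'' regime of the same lemma, again with $q=2$, obtaining the lower bound $|A|^{1/2}\lesssim \left\Vert \left( \sum_{j\in A}| f_j|^2\right)^{1/2}\right\Vert_p$ together with the matching upper bound $\left\Vert \left( \sum_{j\in A}| f_j|^2\right)^{1/2}\right\Vert_p\lesssim |A|^{1/2}$. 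In both cases the two bounds collapse to the claimed equivalence $\approx|A|^{1/2}$.

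There is no genuine obstacle here, which is why the statement is an on-the-spot corollary; the only subtlety is that $q=2$ sits at the common boundary of the two regimes of Lemma~\ref{result:PreviousDemocracyEstimates}, so the value $p=2$ must be peeled off and treated by the direct $L_2$ computation above. One should verify in passing that the two regimes of the lemma are indeed stated with the non-strict inequalities $q\le 2$ and $q\ge 2$, so that the endpoint choice $q=2$ is legitimately admissible whenever $p\neq 2$; this is the case in the statement as given.
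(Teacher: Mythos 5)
Your proof is correct and follows exactly the route the paper intends: the paper simply declares Proposition~\ref{result:LdosSumEstimate} an ``on-the-spot corollary'' of Lemma~\ref{result:PreviousDemocracyEstimates} with $q=2$, which is precisely your argument. Your separate treatment of the endpoint $p=2$ (where the lemma's strict inequalities $p<q$, $q<p$ exclude the case, but the direct $L_2$ computation settles it) is a small point the paper glosses over, and you handle it correctly.
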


\begin{proposition}
\label{result:Democracy}
Let $(X,\Sigma,\mu)$  be a finite measure space and let $1\le p<\infty$. Suppose  $\BB=(\xx_j)_{j\in J}$ is a quasi-greedy basis for
a separable subspace $\XX$ of $L_p(\mu)$ with $\Vert \xx_j \Vert_2\approx 1$ for $j\in J$. 
Then $\BB$ is democratic, hence almost greedy, and
its democracy functions verify 
\[
\varphi_l[\BB,\XX](N)\approx N^{1/2}\approx \varphi_u[\BB,\XX](N),\quad N\in\NN.
\]
\end{proposition}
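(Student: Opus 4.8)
The plan is to derive Proposition~\ref{result:Democracy} as a direct consequence of the machinery already assembled, treating the two cases $1\le p\le 2$ and $2\le p<\infty$ separately so as to invoke the right half of Lemma~\ref{result:DemocracyEstimates}. First I would observe that the hypothesis $\Vert\xx_j\Vert_2\approx 1$ places us exactly in the borderline situation where the exponent $q$ in Lemma~\ref{result:DemocracyEstimates} equals $2$. Concretely, when $1\le p<2$ I would apply the first alternative of that lemma with $q=2$, which yields $N^{1/2}\lesssim\varphi_l[\BB,\XX](N)\le\varphi_u[\BB,\XX](N)\lesssim N^{1/2}$; when $2<p<\infty$ I would apply the second alternative, again with $q=2$, obtaining the same two-sided bound. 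The remaining case $p=2$ is trivial since then the basis is orthonormal-like and $\Vert\sum_{j\in A}\xx_j\Vert_2\approx|A|^{1/2}$ follows from Lemma~\ref{result:Random} together with $\Vert\xx_j\Vert_2\approx 1$. In all cases one concludes $\varphi_l[\BB,\XX](N)\approx N^{1/2}\approx\varphi_u[\BB,\XX](N)$.

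Once both democracy functions are squeezed between multiples of $N^{1/2}$, democracy is immediate: I would note that $\varphi_u[\BB,\XX](N)\approx\varphi_l[\BB,\XX](N)$ is precisely the characterization of democracy for quasi-greedy bases recalled in the introduction (the statement that ``a quasi-greedy basis $\BB$ is democratic if and only if $\varphi_u[\BB,\XX](N)\approx\varphi_l[\BB,\XX](N)$''). Alternatively, and more elementarily, for finite sets $A,B$ with $|A|=|B|=N$ one has $\Vert\sum_{j\in A}\xx_j\Vert_p\le\varphi_u(N)\lesssim N^{1/2}\lesssim\varphi_l(N)\le\Vert\sum_{j\in B}\xx_j\Vert_p$, which is exactly the democracy inequality with a uniform constant. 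Having established that $\BB$ is both quasi-greedy (by hypothesis) and democratic, I would then appeal to the characterization of Dilworth et al.\ \cite{DKKT2003} cited in the introduction to conclude that $\BB$ is almost greedy.

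I do not anticipate a genuine obstacle here, since the proposition is explicitly advertised as an ``on-the-spot corollary'' of Lemma~\ref{result:DemocracyEstimates}. The only point requiring a small amount of care is the role of $p=2$: Lemma~\ref{result:DemocracyEstimates} is stated for strict inequalities $p<q$ or $q<p$, so the endpoint $p=q=2$ is not literally covered and must be handled by the direct computation using Lemma~\ref{result:Random}. I would therefore make sure the write-up either splits off $p=2$ explicitly or phrases the application of Lemma~\ref{result:DemocracyEstimates} with $q=2$ in a way valid for $p\ne 2$, and then patches $p=2$ separately. Beyond that bookkeeping, the proof reduces to substituting $q=2$ and reading off that the upper and lower bounds coincide, so the entire argument should fit in a few lines.

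\begin{proof}
Since $\BB$ is quasi-greedy it is semi-normalized in $L_p(\mu)$. We distinguish cases according to the position of $p$ relative to $2$. If $1\le p<2$, apply the first part of Lemma~\ref{result:DemocracyEstimates} with $q=2$ to obtain
\[
N^{1/2}\lesssim\varphi_l[\BB,\XX](N)\le\varphi_u[\BB,\XX](N)\lesssim N^{1/2},\qquad N\in\NN.
\]
If $2<p<\infty$, apply instead the second part of Lemma~\ref{result:DemocracyEstimates} with $q=2$, which yields the same two-sided estimate. Finally, if $p=2$, then by Lemma~\ref{result:Random} and the hypothesis $\Vert\xx_j\Vert_2\approx 1$ we have, for finite $A\subseteq J$,
\[
\left\Vert\sum_{j\in A}\xx_j\right\Vert_2\approx\left(\sum_{j\in A}\Vert\xx_j\Vert_2^2\right)^{1/2}\approx|A|^{1/2},
\]
so again $\varphi_l[\BB,\XX](N)\approx N^{1/2}\approx\varphi_u[\BB,\XX](N)$.

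In every case the upper and lower democracy functions satisfy $\varphi_u[\BB,\XX](N)\approx\varphi_l[\BB,\XX](N)$, whence $\BB$ is democratic. Being simultaneously quasi-greedy and democratic, $\BB$ is almost greedy by the characterization in \cite{DKKT2003}.
\end{proof}
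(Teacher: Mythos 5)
Your proof is correct and follows exactly the route the paper intends: Proposition~\ref{result:Democracy} is presented there as an on-the-spot corollary of Lemma~\ref{result:DemocracyEstimates}, obtained by taking $q=2$ in both alternatives so that the lower bound $N^{1/2}\lesssim\varphi_l$ and the upper bound $\varphi_u\lesssim N^{1/q}=N^{1/2}$ collapse to the same power, after which democracy and (via \cite{DKKT2003}) almost greediness follow. Your explicit handling of the endpoint $p=2$, which the strict inequalities in Lemma~\ref{result:DemocracyEstimates} literally exclude and which you patch with Lemma~\ref{result:Random}, is a small but genuine refinement of the paper's unwritten one-line argument.
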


We are now en route to completing the proof of Theorem~\ref{NotSimultaneouslyBounded}. Before we do so, we write down two classical results in the isomorphic theory of Banach spaces which are very well-known to the specialists. In order to make the paper as self-contained as possible we sketch their proofs.

\begin{theorem}
\label{result:KaPeSubbasis} 
Let $(X,\Sigma,\mu)$ be a measure space. Suppose that $\BB$ is a seminormalized unconditional basis of a non-Hilbertian Banach space $\XX\subseteq L_p(\mu)$, $1<p<\infty$. Suppose also that $\XX$ is complemented in $L_p(\mu)$. Then
$\BB$ has a subbasis equivalent to the unit vector basis of~$\ell_p$.
\end{theorem}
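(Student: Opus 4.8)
The plan is to run the classical Kadets--Pe\l czy\'nski circle of ideas, using the unconditional structure of $\BB$ to phrase norm estimates through square functions, the non-Hilbertian hypothesis to steer the dichotomy toward its ``$\ell_p$ side'', and the complementation hypothesis to reduce the range $1<p<2$ to the range $p>2$ by duality. First I would perform two reductions. Since $1<p<\infty$, both $L_p(\mu)$ and $\XX$ are reflexive, so the coordinate functionals $\BB^*=(\xx_j^*)_{j\in J}$ form a seminormalized unconditional basis of $\XX^*$. If $1<p<2$, a bounded projection of $L_p(\mu)$ onto $\XX$ dualizes to a bounded projection of $L_q(\mu)$ onto a subspace isomorphic to $\XX^*$, with $q>2$; moreover $\XX^*$ is non-Hilbertian exactly when $\XX$ is, and a subbasis of $\BB^*$ equivalent to the unit vector basis of $\ell_q$ dualizes to a subbasis of $\BB$ equivalent to the unit vector basis of $\ell_p$. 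This lets me assume throughout that $p>2$, and after rescaling that $\Vert\xx_j\Vert_p=1$ for all $j$. The structural tool I would keep at hand is the square-function equivalence that unconditionality and Khintchine's inequality provide for every measure (the unconditional-basis analogue of Lemma~\ref{result:Random}): for all finitely supported scalars,
\[
\Big\Vert \sum_{j}a_j\xx_j\Big\Vert_p \approx \Big\Vert \Big(\sum_{j}|a_j|^2|\xx_j|^2\Big)^{1/2}\Big\Vert_p .
\]

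Next I would invoke the Kadets--Pe\l czy\'nski subsequence theorem: every seminormalized unconditional basic sequence in $L_p(\mu)$, $p>2$, has a subsequence equivalent to the unit vector basis of $\ell_2$ or a subsequence equivalent to the unit vector basis of $\ell_p$. The engine is the family of sets $M_\sigma=\{f\in L_p(\mu):\mu(\{|f|\ge\sigma\Vert f\Vert_p\})\ge\sigma\}$, which isolate the functions whose mass does not concentrate on sets of small measure. If, after passing to a subsequence, the mass of the $\xx_j$ escapes every $M_\sigma$, a gliding-hump argument lets me select measurable sets carrying all but a summable fraction of each $\Vert\xx_j\Vert_p$ and pairwise disjoint; truncating to these sets produces disjointly supported normalized functions, which are isometrically equivalent to the $\ell_p$ unit vector basis, and the principle of small perturbations for basic sequences transfers this equivalence back to the subbasis of $\BB$. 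In the complementary, non-degenerate regime the $L_2$- and $L_p$-norms stay comparable on the selected vectors, and this is what feeds the $\ell_2$ alternative.

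It remains to exclude the possibility that only the $\ell_2$ alternative ever occurs. Here I would argue that if every subsequence of $\BB$ had a further subsequence equivalent to the $\ell_2$ unit vector basis, then a stabilization argument together with the displayed square-function estimate would force $\BB$ itself to be equivalent to the $\ell_2$ unit vector basis, making $\XX$ isomorphic to a Hilbert space and contradicting the hypothesis. Consequently some subsequence must fall on the $\ell_p$ side of the dichotomy, and that subsequence is the desired subbasis equivalent to the unit vector basis of $\ell_p$.

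The hard part will be the two ingredients I have black-boxed. The first is the disjointification-plus-perturbation step, which must be carried out over a possibly infinite, non-finite measure; one has to control not only the concentration mode of escape (mass fleeing onto sets of small measure, which is what manufactures the $\ell_p$ copy) but also the spreading mode that can occur when $\mu(X)=\infty$, and one has to guarantee that the $\ell_p$ copy is realized as a genuine subsequence of $\BB$ rather than as a block basic sequence or an abstract subspace. The second is the stabilization needed to rule out the pure-$\ell_2$ scenario, where the delicate point is upgrading ``every subsequence has a further $\ell_2$-subsequence'' to a \emph{uniform} $\ell_2$-equivalence of the whole basis. The hypotheses are precisely tailored to these difficulties: reflexivity and complementation legitimize the reduction to $p>2$, while unconditionality---through the square-function identity---is what both detects the failure of the Hilbertian alternative and supplies the almost-disjointness that drives the perturbation argument.
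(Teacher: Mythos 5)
Your plan is correct, but it follows a genuinely different route from the paper. The paper's proof is essentially a reduction to the literature: assume without loss of generality that $L_p(\mu)$ is separable, invoke the classification of separable $L_p$-spaces to split into the two cases $L_p(\mu)$ isomorphic to $L_p[0,1]$ or to $\ell_p$, and then quote the argument of Kadec and Pe{\l}czy{\'n}ski (\cite{KadPel1962}*{Theorem~4}) in the first case and the Bessaga--Pe{\l}czy{\'n}ski selection principle together with perfect homogeneity of the unit vector basis of $\ell_p$ in the second. You instead rebuild the Kadec--Pe{\l}czy{\'n}ski argument itself: duality to pass from $1<p<2$ to $p>2$ (the only place where complementation is truly needed), the $M_\sigma$ concentration dichotomy, and square functions to exclude the all-$\ell_2$ alternative; this is in effect a self-contained proof of the theorem you would otherwise cite. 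Your duality step is sound, with the proviso that passing from an $\ell_q$-subbasis $(\xx_j^*)_{j\in J_0}$ of $\BB^*$ to an $\ell_p$-subbasis $(\xx_j)_{j\in J_0}$ of $\BB$ uses that the span of $(\xx_j)_{j\in J_0}$ is complemented in $\XX$ (coordinate projection, by unconditionality), so that $(\xx_j^*)_{j\in J_0}$ is equivalent to the dual basis of $(\xx_j)_{j\in J_0}$, and reflexivity then lets you dualize the equivalence of bases. Moreover, your two declared black boxes close by standard means, and more easily than you fear. First, since $\BB$ is seminormalized and biorthogonal, $J$ is countable, so all the $\xx_j$ are supported on a common $\sigma$-finite set; a change of density $d\nu=w\,d\mu$ then turns $L_p$ of a $\sigma$-finite measure lattice-isometrically into $L_p$ of a probability measure, so there is no ``spreading mode'' to control (alternatively, the paper's separable reduction achieves the same). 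Second, no stabilization or uniformity upgrade is needed: if no subsequence of $\BB$ is equivalent to the unit vector basis of $\ell_p$, the concentration argument forces $\inf_j\Vert\xx_j\Vert_2>0$, and then for $p>2$ and probability $\mu$,
\[
\Big(\sum_j |a_j|^2\Vert \xx_j\Vert_2^2\Big)^{1/2}\le\Big\Vert\Big(\sum_j |a_j|^2|\xx_j|^2\Big)^{1/2}\Big\Vert_p\le \Big(\sum_j |a_j|^2\Vert\xx_j\Vert_p^2\Big)^{1/2},
\]
where the left inequality follows from monotonicity of the $L_r$-norms and the right one from the triangle inequality in $L_{p/2}$; combined with your displayed Khintchine equivalence this makes the \emph{whole} basis $\ell_2$-equivalent with constants depending only on the unconditional constant, Khintchine's constants, and $\inf_j\Vert\xx_j\Vert_2$, whence $\XX$ is Hilbertian, a contradiction. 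In summary, the paper's route is shorter and sidesteps general measure spaces entirely at the cost of outsourcing the hard analysis, while yours is self-contained and makes visible exactly where each hypothesis (complementation, non-Hilbertian, unconditionality) enters.
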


\begin{proof}
 Without loss of generality we may and do assume that $L_p(\mu)$ is separable. Then $L_p(\mu)$ is isomorphic either to $L_p[0,1]$ or to~$\ell_p$
(see~\cite{JohnsonLindes2001}).
In the first case, the same argument used by Kadec and Pe{\l}czy{\'n}ski to prove \cite{KadPel1962}*{Theorem~4} leads to our goal. In the last case, by the Bessaga-Pe{\l}czy{\'n}ski selection principle (\cite{BePe1958}*{p.~214}), $\BB$ has a subbasis equivalent to a block basic sequence of the unit vector basis of~$\ell_p$.
Since the unit vector basis of $\ell_p$ is perfectly homogeneous, this subbasis is equivalent to the unit vector basis of~$\ell_p$.
\end{proof}

%We in two lemma a well known argument in the study of $L_p(\mu)$-spaces,
\begin{lemma}
\label{result:EquivalentNorms} 
Let $(X,\Sigma,\mu)$ be a finite measure space and let $0<p<q\le \infty$. Consider a subset $M\subseteq L_p(\mu)$ such that $\Vert f\Vert_p\approx\Vert f\Vert_q$ for $f\in M$. Then, for any $0<r\le q$, $\Vert f\Vert_r \approx\Vert f\Vert_p\approx\Vert f\Vert_q$ for $f\in M$.
\end{lemma}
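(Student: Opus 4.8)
The plan is to split the claim into the easy inequalities, which hold for every family on a finite measure space, and the single genuinely nontrivial estimate, for which the hypothesis $\Vert f\Vert_p\approx\Vert f\Vert_q$ is indispensable. The key observation is that once we know $\Vert f\Vert_p\lesssim\Vert f\Vert_r$ for every $0<r<p$, the whole statement follows by chaining elementary inclusions with the hypothesis.

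First I would record the standard inclusion for finite measure spaces: if $0<a\le b\le\infty$ then $\Vert f\Vert_a\le\mu(X)^{1/a-1/b}\Vert f\Vert_b$ for every measurable $f$ (with the convention $1/\infty=0$), which is just H\"older's inequality. This immediately gives $\Vert f\Vert_r\lesssim\Vert f\Vert_p$ whenever $r\le p$ and $\Vert f\Vert_r\lesssim\Vert f\Vert_q$ whenever $r\le q$. In particular, for $p\le r\le q$ we obtain $\Vert f\Vert_p\lesssim\Vert f\Vert_r\lesssim\Vert f\Vert_q$ for $f\in M$; combining this with the hypothesis $\Vert f\Vert_q\lesssim\Vert f\Vert_p$ forces all three quantities to be comparable, so the lemma holds for $r\in[p,q]$ with no further work.

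It remains to treat $0<r<p$, where the only missing estimate is $\Vert f\Vert_p\lesssim\Vert f\Vert_r$. Since $p<q$ by hypothesis, we have $r<p<q$, and I would invoke the log-convexity of the $L_r$-norms (i.e.\ H\"older's inequality applied to the factorization $|f|^p=|f|^{p(1-\theta)}|f|^{p\theta}$): choosing $\theta\in(0,1)$ by $1/p=(1-\theta)/r+\theta/q$ gives
\[
\Vert f\Vert_p\le\Vert f\Vert_r^{1-\theta}\Vert f\Vert_q^{\theta},\qquad f\in M.
\]
When $q=\infty$ this reads $\Vert f\Vert_p\le\Vert f\Vert_r^{r/p}\Vert f\Vert_\infty^{1-r/p}$, obtained directly from $\int|f|^p\le\Vert f\Vert_\infty^{p-r}\int|f|^r\,d\mu$, so the two cases are uniform. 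Now apply the hypothesis in the form $\Vert f\Vert_q\lesssim\Vert f\Vert_p$ to the right-hand side to get $\Vert f\Vert_p\lesssim\Vert f\Vert_r^{1-\theta}\Vert f\Vert_p^{\theta}$ for $f\in M$, and divide by $\Vert f\Vert_p^{\theta}$ (the case $f=0$ being trivial) to conclude $\Vert f\Vert_p\lesssim\Vert f\Vert_r$.

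The main obstacle is precisely this last self-improving step: the hypothesis $\Vert f\Vert_q\approx\Vert f\Vert_p$ acts as a reverse H\"older inequality that lets us absorb the higher norm $\Vert f\Vert_q$ back into $\Vert f\Vert_p$ on the right-hand side, so that the offending power $\Vert f\Vert_p^{\theta}$ can be cancelled; everything else is bookkeeping with the finite-measure inclusions. Once $\Vert f\Vert_p\approx\Vert f\Vert_r$ is established for $r<p$, together with the already-proved comparisons it yields $\Vert f\Vert_r\approx\Vert f\Vert_p\approx\Vert f\Vert_q$ for all $0<r\le q$, as desired.
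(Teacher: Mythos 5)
Your proof is correct and follows essentially the same route as the paper: after disposing of the trivial range $p\le r\le q$, the key estimate $\Vert f\Vert_p\lesssim\Vert f\Vert_r$ for $r<p$ is obtained in the paper by H\"older applied to the splitting $|f|^p=|f|^a|f|^{p-a}$ with $a/q+(p-a)/r=1$, which is precisely your log-convexity inequality with $a=p\theta$, followed by the same absorption of $\Vert f\Vert_q\lesssim\Vert f\Vert_p$ and cancellation of the resulting power of $\Vert f\Vert_p$. The only cosmetic difference is that you treat $q=\infty$ explicitly, which the paper leaves implicit.
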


\begin{proof} 
The result is obvious for $p\le r\le q$, so we assume that $r<p$. Then, it is also obvious that $\Vert f\Vert_r\lesssim \Vert f\Vert_p$ for $f\in M$. To prove the reverse inequality, consider $0<a<p$ such that $a/q+(p-a)/r=1$.
Let $f\in M$. By H\"older's inequality,
\begin{align*}
\Vert f\Vert_p^p 
&=\int_X |f|^a |f|^{p-a} \, d \mu\\
&\le \left( \int_X |f|^{q}\, d\mu \right)^{a/q} \left( \int_X |f|^{r}\, d\mu \right)^{(p-a)/r}\\
&=\Vert f\Vert_q^a \Vert f\Vert_r^{p-a}\\
&\lesssim \Vert f\Vert_p^a \Vert f\Vert_r^{p-a}.
\end{align*}
Simplifying, we get $\Vert f\Vert_p^{p-a}\lesssim \Vert f\Vert_r^{p-a}$.
\end{proof}

\begin{proof}[Proof of Theorem~\ref{NotSimultaneouslyBounded}] 
Let $(\xx_j)_{j\in J}$ be a semi-normalized unconditional basis of a non-Hilbertian Banach space $\XX\subseteq L_p(\mu)$, where $p\in(1,\infty)\setminus\{2\}$ and $\mu$ is finite.
We divide the proof in three cases.

\smallskip

\noindent\textsc{Case 1:} $1<p<2$ and $p<q$. Assume that $\limsup_j \Vert x_j\Vert_q<\infty$.
We can suppose, without loss of generality, that $p<q\le 2$.
Let $J_0=\{ j : \Vert \xx_j\Vert_q<\infty \}$, $\BB_0=(\xx_j)_{j\in J_0}$ and then define
$\XX_0$ as the closed subspace spanned by $\BB_0$ in $L_p(\mu)$.
We have that $J\setminus J_0$ is finite and that
$\Vert \xx_j\Vert_q\approx 1$ for $j\in J_0$. By Lemma~\ref{result:DemocracyEstimates}, $\varphi_u[\BB_0,\XX_0](N)\lesssim N^{1/q}$. Furthermore, by
Theorem~\ref{result:KaPeSubbasis}, $\BB_0$ has a subbasis equivalent to the unit vector basis of $\ell_p$. Therefore,
$N^{1/p}\lesssim \varphi_u[\BB_0,\XX_0](N)$. Combining, we obtain $N^{1/p}\lesssim N^{1/q}$. This absurdity proves the result. %for $1<p<2$.

\smallskip

\noindent\textsc{Case 2:} $2<p<\infty$ and $q<p$. This case is the ``dual'' of the previous one. Since  its proof is similar we leave it out to the reader.

\smallskip

\noindent\textsc{Case 3:} $2<p<q$. Suppose that $\limsup_j \Vert x_j\Vert_q<\infty$. Removing a finite set of terms from $\BB$ we get
$\Vert \xx_j\Vert_q\approx 1$. By Lemma~\ref{result:EquivalentNorms}, $\Vert \xx_j\Vert_2 \approx 1$, contradicting the already proven Case~2.
\end{proof}

\begin{remark} 
The proof of Theorem~\ref{NotSimultaneouslyBounded}
reinforces the role of  democracy as a hinge property in the study of unconditional bases in Banach spaces. This idea was already inferred in the work of Zippin~\cite{Zippin1966}, where he characterizes perfectly homogeneous bases.
\end{remark}

We close with the analogous result to Theorem~\ref{NotSimultaneouslyBounded} for the  case $p=1$.
To better understand the statement and its proof we recall that, given $1\le p\le \infty$, an infinite-dimensional Banach space $\XX$ is said to be a \textit{$\LL_p$-space} if there is $\lambda\ge 1$ such that for every finite-dimensional subspace $E\subseteq \XX$ there is $d\in\NN$ and a $d$-dimensional subspace
$E\subseteq F\subseteq \XX$ that satisfies $d(F,\ell_p^d)\le \lambda$. For $1<p<\infty$, $\LL_p$-spaces are characterized as non-Hilbertian complemented subspaces of $L_p(\mu)$-spaces, while $\XX$ is an $\LL_1$-space if and only if
$\XX^{**}$ is isomorphic to a complemented subspace of an $L_1(\mu)$-space. Hence, since $\XX$ embeds isometrically in $\XX^{**}$, it is natural to regard 
$\LL_1$-spaces as (possibly non complemented) subspaces of $L_1(\mu)$-spaces.
A fundamental property is that any $\LL_1$-space has the \textit{Grothendieck's Theorem property} (is a GT-space, for short).
 We refer to \cites{LinPel1968, LinRos1969} for details.
 
\begin{proposition}
\label{result:NotBoundedOne}
Let $\mu$ be a finite measure. Suppose that $\BB=(\xx_j)_{j\in J}$ is a quasi-greedy basis for a Banach space
$\XX\subseteq L_1(\mu)$.
% Assume also that $\XX^{**}$ is complemented in $L_1^{**}(\mu)$
% (respectively $\XX^{**}$ is complemented in $L_\infty^{**}(\mu)$).
Assume also that $\XX$ is a GT-space.
Then $\limsup_{j\in J} \Vert \xx_j\Vert_q=\infty$ for any $1< q\le\infty$.
\end{proposition}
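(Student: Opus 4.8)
The plan is to argue by contradiction, extracting the extra strength hidden in the GT hypothesis from Grothendieck's theorem in its ``every operator into a Hilbert space is $1$-summing'' formulation. Suppose, then, that $\limsup_{j\in J}\Vert\xx_j\Vert_q<\infty$ for some $q\in(1,\infty]$. Since $\mu$ is finite and quasi-greedy bases are semi-normalized in $L_1(\mu)$, by replacing $q$ with $\min\{q,2\}$ I may assume $1<q\le2$ and $\Vert\xx_j\Vert_q\approx\Vert\xx_j\Vert_1\approx1$ for $j\in J$. Lemma~\ref{result:DemocracyEstimates} (with $p=1$) then supplies the two-sided control $N^{1/2}\lesssim\varphi_l[\BB,\XX](N)\le\varphi_u[\BB,\XX](N)\lesssim N^{1/q}$ for $N\in\NN$. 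The right-hand inequality, which is strictly sublinear because $q>1$, is the estimate I intend to contradict.

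The crux is a \emph{near-linear} lower bound for $\varphi_u[\BB,\XX]$ coming from the GT hypothesis. For finite $A\subseteq J$ consider the finite-rank coordinate operator $T_A\colon\XX\to\ell_2(A)$ given by $T_A f=(\xx_j^*(f))_{j\in A}$, which satisfies $T_A\xx_j=e_j$, so that $\Vert T_A\xx_j\Vert_2=1$ for every $j\in A$. The quantitative input is the standard quasi-greedy decay of the decreasing rearrangement $(a_n^*(f))_n$ of $(|\xx_j^*(f)|)_{j\in J}$: a truncation argument combining the boundedness of the greedy operators \eqref{result:WoCondition} with \eqref{result:UCCCondition} yields $a_n^*(f)\,\varphi_l[\BB,\XX](n)\lesssim\Vert f\Vert_1$, whence $a_n^*(f)\lesssim\Vert f\Vert_1/\sqrt n$ by the lower bound of the previous paragraph. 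Summing squares over the $|A|$ largest coefficients gives $\sum_{j\in A}|\xx_j^*(f)|^2\le\sum_{n=1}^{|A|}(a_n^*(f))^2\lesssim\Vert f\Vert_1^2\log(1+|A|)$, that is, $\Vert T_A\Vert\lesssim\sqrt{\log(1+|A|)}$.

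Now I invoke the GT property. Since $\XX$ is a GT-space, $T_A$ is $1$-summing with $\pi_1(T_A)\lesssim\Vert T_A\Vert\lesssim\sqrt{\log(1+|A|)}$, the implicit constant depending only on $\XX$ and not on $A$. Applying the Pietsch domination theorem to $T_A$ and evaluating on the vectors $(\xx_j)_{j\in A}$ gives
\[
|A|=\sum_{j\in A}\Vert T_A\xx_j\Vert_2\le\pi_1(T_A)\sup_{\Vert x^*\Vert\le1}\sum_{j\in A}|x^*(\xx_j)|=\pi_1(T_A)\sup_{\varepsilon}\Big\Vert\sum_{j\in A}\varepsilon_j\xx_j\Big\Vert_1,
\]
and by \eqref{result:UCCCondition} the last supremum is $\lesssim\Vert\sum_{j\in A}\xx_j\Vert_1\le\varphi_u[\BB,\XX](|A|)$. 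Hence $\varphi_u[\BB,\XX](N)\gtrsim N/\sqrt{\log(1+N)}$ for all $N\in\NN$. Together with $\varphi_u[\BB,\XX](N)\lesssim N^{1/q}$ this forces $N^{1-1/q}\lesssim\sqrt{\log(1+N)}$, which is impossible as $N\to\infty$ because $q>1$; this contradiction proves the proposition.

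I expect the main obstacle to be the sharp operator-norm estimate $\Vert T_A\Vert\lesssim\sqrt{\log(1+|A|)}$. The trivial pointwise bound only gives $\Vert T_A\Vert\lesssim|A|^{1/2}$, and feeding that into Grothendieck merely reproduces the free cotype-$2$ inequality $\varphi_u\gtrsim N^{1/2}$, which is useless here. Squeezing out the logarithmic bound is precisely where the quasi-greedy coefficient decay $a_n^*(f)\lesssim\Vert f\Vert_1/\varphi_l[\BB,\XX](n)$ and the lower democracy estimate $\varphi_l[\BB,\XX](N)\gtrsim N^{1/2}$ must be used in tandem; one must also be careful that the $1$-summing constant supplied by the GT hypothesis is independent of $A$, so that the resulting inequality for $\varphi_u[\BB,\XX](N)$ is uniform in $N$.
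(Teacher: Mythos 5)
Your proof is correct, but it takes a genuinely different route from the paper's. Both arguments share the same outer skeleton: assume $\limsup_j\Vert\xx_j\Vert_q<\infty$, reduce to $1<q\le 2$ with $\Vert\xx_j\Vert_q\approx 1$ after discarding finitely many terms, invoke Lemma~\ref{result:DemocracyEstimates} to get $\varphi_u[\BB,\XX](N)\lesssim N^{1/q}$, and contradict this with a near-linear lower bound on the fundamental function. The difference is where that lower bound comes from. The paper obtains it in one line by citing \cite{DST2012}*{Theorem~4.2}, which asserts that a quasi-greedy basis of a GT-space satisfies $\varphi_u[\BB,\XX](N)\approx N$; this is exactly where the GT hypothesis enters, but as a black box. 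You instead manufacture the bound from scratch: you control the coordinate operator $T_A\colon\XX\to\ell_2(A)$ by $\Vert T_A\Vert\lesssim\sqrt{\log(1+|A|)}$, combining the lower estimate $N^{1/2}\lesssim\varphi_l[\BB,\XX](N)$ of Lemma~\ref{result:DemocracyEstimates} with the coefficient-decay inequality $a_n^*(f)\,\varphi_l[\BB,\XX](n)\lesssim\Vert f\Vert$ --- a standard truncation lemma for quasi-greedy bases which is \emph{not} proved in this paper and should be cited (it appears in \cite{DKKT2003} and \cite{DST2012}) --- and then you use the GT property in its operator form $\pi_1(T_A)\lesssim\Vert T_A\Vert$, together with unconditionality for constant coefficients \eqref{result:UCCCondition} and biorthogonality ($T_A\xx_j=e_j$), to get $N\lesssim\sqrt{\log(1+N)}\,\varphi_u[\BB,\XX](N)$. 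This is logarithmically weaker than the cited $\varphi_u\approx N$, but amply sufficient, since $N^{1-1/q}\lesssim\sqrt{\log(1+N)}$ is absurd for $q>1$. In effect you have reconstructed, inline and with a harmless log loss, the proof of the very theorem the paper cites; what your route buys is self-containedness and a transparent use of the GT hypothesis, at the cost of importing one standard quasi-greedy decay lemma. As a side remark, your argument actually bounds $\Vert\sum_{j\in A}\xx_j\Vert_1$ from below for \emph{every} finite $A$, hence bounds $\varphi_l$ as well; feeding $\varphi_l(n)\gtrsim n/\sqrt{\log(1+n)}$ back into the decay inequality yields $\Vert T_A\Vert\lesssim 1$ uniformly, and a second pass through the same computation recovers the full statement $\varphi_l[\BB,\XX](N)\approx\varphi_u[\BB,\XX](N)\approx N$ of the cited theorem.
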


\begin{proof}
Appealing to \cite{DST2012}*{Theorem~4.2} we get
$\varphi_u[\BB,\XX](N)\approx N$.
Assume that $\limsup_{j\in J} \Vert \xx_j\Vert_q<\infty$ for some $1<q\le\infty$.
Then, without lost of generality we can suppose that $q\le 2$ and, by removing a finite set of terms from $\BB$ if necessary, that $\Vert \xx_j \Vert_q\approx 1$. Applying Lemma~\ref{result:DemocracyEstimates} we get $\varphi_u[\BB,\XX](N)\lesssim N^{1/q}$, which leads to $N\lesssim N^{1/q}$, a contradiction.
\end{proof}

%\begin{theorem}
%\label{result:NotBoundedInfinity}
%Let $\mu$ be a finite measure. Suppose that $(\xx_j)_{j\in J}$ is a quasi-greedy basis for a Banach space
%$\XX\subseteq L_\infty(\mu)$.
%Assume that $\XX$ is complemented in $L_\infty(\mu)$ (or that $\XX^{**}$ is complemented 
%in $L_\infty^{**}(\mu)$). Then, for any $1\le q<\infty$, $\liminf_{j\in J} \Vert \xx_j\Vert_q=0$.
%\end{theorem}

%\begin{proof}
%Assume that it is not the case. We can suppose that $2\le q<\infty$ and that $\Vert \xx_j \Vert_q\approx 1$.
%Applying Lemma~\ref{result:DemocracyEstimates} we get $N^{1/q}\lesssim \varphi_l[\BB,\XX](N)$.
%Since $\XX^*$ is a GT space, $\BB$ is equivalent to the unit vector basis of $c_0$ (see \cite{DKK2003}*{Theorem 8.5}).
%Therefore, $\varphi_l[\BB,\XX](N)\approx 1$. We arise to the absurdity $ N^{1/q} \lesssim 1$.
%\end{proof}

\begin{remark} 
The subspace spanned by the Rademacher functions in $L_{p}(\mu)$ serves as an example to show that the assumption
``non-Hilbertian'' cannot be dropped from
Theorem~\ref{NotSimultaneouslyBounded} and that the assumption of 
$\XX$ being a GT-space cannot be dropped from
Proposition~\ref{result:NotBoundedOne}.
\end{remark}

% -----------------------
\section{The greedy algorithm for Jacobi polynomials}
\label{Jacobi}
% -----------------------

\noindent In this section, besides the orthonormal polynomials $(p_n^{(\alpha,\beta)})_{n=0}^\infty$ defined in Section~\ref{Introduction}, we consider
 the polynomials $(P_n^{(\alpha,\beta)})_{n=0}^\infty$ which are orthogonal for the measure 
$\mu_{\alpha,\beta}$ defined in \eqref{measureJacobi} and verify the \textit{normalization condition}
\begin{equation}
\label{ValueInOne}
P_n^{(\alpha,\beta)} (1)=\binom{n+\alpha}{n}, \quad n\in \NN\cup\{0\}.
\end{equation}
Of course, there are positive scalars $d_n$ such that $p_n^{(\alpha,\beta)}=d_n P_n^{(\alpha,\beta)}$, $n\ge 0$. It is well known that the normalization sequence 
$(d_n)_{n=0}^\infty $ verifies
\begin{equation}
\label{eq:asym-dn}
d_n
= \Big(\frac {(2n+\alpha+\beta+1)n!\,\Gamma(n+\alpha+\beta+1)}
{2^{\alpha+\beta+1}\Gamma(n+\alpha+1)\Gamma(n+\beta+1)}
 \Big)^{1/2}
\approx n^{1/2}, \, n\in\NN.
\end{equation}
In what follows, with the aim of avoid cumbrous notations, we will denote by $(n^{1/2} P_n^{(\alpha,\beta)})_{n=0}^\infty$ the extension of $(n^{1/2} P_n^{(\alpha,\beta)})_{n=1}^\infty$ whose $0$-term
is the constant function~$1$.
In the light of \eqref{eq:asym-dn},
it is reasonable to expect that the sequences $(p_n^{(\alpha,\beta)})_{n=0}^\infty$ and $(n^{1/2} P_n^{(\alpha,\beta)})_{n=0}^\infty$ behave similarly.
 Wojtaszczyk \cite{Wo2000} confirmed this fact by showing that quasi-greedy bases verify the following perturbation principle. 
%Before turning to Jacobi polynomials, let us mention the following perturbation principle 
%for quasi-greedy bases from \cite{Wo2000} that we will need later on.

\begin{theorem}[cf.~\cite{Wo2000}*{Proposition~3}]
\label{result:Perturbing} 
Suppose that $(\xx_j)_{j\in J}$ is a quasi-greedy basis for a Banach space~$\XX$. Let $(\lambda_j)_{j\in J}$ be a family of scalars such that
\[
0<\inf_{j\in J} |\lambda_j|\le \sup_{j\in J} |\lambda_j|<\infty.
\]
Then $(\lambda_j \xx_j)_{j\in J}$ is a quasi-greedy basis for $\XX$.
\end{theorem}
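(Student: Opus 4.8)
The plan is to verify that the perturbed family $(\lambda_j\xx_j)_{j\in J}$, with biorthogonal functionals $(\lambda_j^{-1}\xx_j^*)_{j\in J}$, is again a seminormalized bounded basis for $\XX$ (immediate from $0<\inf_j|\lambda_j|\le\sup_j|\lambda_j|<\infty$), and then to show that its greedy operators $\widetilde\GG_m$ are uniformly bounded, which by \eqref{result:WoCondition} is precisely quasi-greediness. Write $a=\inf_j|\lambda_j|$, $b=\sup_j|\lambda_j|$ and $t=a/b\in(0,1]$, and let $C_w$ be the quasi-greedy constant of $(\xx_j)_{j\in J}$. The crucial observation is that the $m$-term greedy approximation for the \emph{new} basis is a coordinate projection of the \emph{old} one: if $\tilde A$ collects the $m$ largest values of $|\lambda_j^{-1}\xx_j^*(f)|$, then
\[
\widetilde\GG_m(f)=\sum_{j\in\tilde A}\bigl(\lambda_j^{-1}\xx_j^*(f)\bigr)\bigl(\lambda_j\xx_j\bigr)=\sum_{j\in\tilde A}\xx_j^*(f)\xx_j=:P_{\tilde A}(f).
\]
Moreover, for $j\in\tilde A$ and $k\notin\tilde A$ the inequality $|\lambda_j|^{-1}|\xx_j^*(f)|\ge|\lambda_k|^{-1}|\xx_k^*(f)|$ forces $|\xx_j^*(f)|\ge t\,|\xx_k^*(f)|$, so $\tilde A$ is a \emph{$t$-approximate greedy set} for $f$ with respect to $\BB$. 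Everything thus reduces to a single estimate: coordinate projections of $\BB$ onto $t$-approximate greedy sets are uniformly bounded.

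To prove that estimate I would split $\tilde A$ according to the size of the original coefficients. Put $M=\max_{k\notin\tilde A}|\xx_k^*(f)|$ (the case $M=0$ is trivial, as then $P_{\tilde A}(f)=f$), and set $G=\{j:|\xx_j^*(f)|>M\}$ and $D=\tilde A\setminus G$. Then $G\subseteq\tilde A$ is a genuine greedy (superlevel) set, so $\|P_G(f)\|\le C_w\|f\|$ directly from \eqref{result:WoCondition}. The remaining set $D$ sits in the \emph{comparable band} $\{j:tM\le|\xx_j^*(f)|\le M\}$, the lower bound coming from $t$-approximate greediness, and it is $\|P_D(f)\|$ that must be controlled.

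For the band part I would first record two auxiliary inequalities that follow from \eqref{result:WoCondition} by Abel summation, ordering the indices by decreasing $|\xx_j^*(f)|$ so that the partial sums are exactly the greedy approximations $\GG_k$ of the vectors involved, each of norm at most $C_w$ times the norm of the vector. The first is the multiplier bound $\bigl\|\sum_{j\in S}a_j\xx_j\bigr\|\le C_w\bigl(\max_{j\in S}|a_j|\bigr)\bigl\|\sum_{j\in S}\sgn(a_j)\xx_j\bigr\|$, whose telescoping is clean because the weights $|a_j|$ are monotone along the ordering; applied on $S=D$ with $\max_D|\xx_j^*(f)|\le M$ it gives $\|P_D(f)\|\le C_w\,M\,\bigl\|\sum_{j\in D}\sgn(\xx_j^*(f))\xx_j\bigr\|$. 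The second is a dual capping bound: if $\min_{j\in S}|a_j|\ge\beta$ then $\beta\bigl\|\sum_{j\in S}\sgn(a_j)\xx_j\bigr\|\le 2C_w\bigl\|\sum_{j\in S}a_j\xx_j\bigr\|$. Taking $S=G'=\{j:|\xx_j^*(f)|\ge tM\}\supseteq D$, a greedy set with $\|P_{G'}(f)\|\le C_w\|f\|$ and $\beta=tM$, this bounds $tM\bigl\|\sum_{j\in G'}\sgn(\xx_j^*(f))\xx_j\bigr\|$ by a multiple of $\|f\|$; since a $\pm1$-coefficient vector has every subset as a greedy set, passing from $G'$ to $D\subseteq G'$ via \eqref{result:WoCondition} then yields $tM\bigl\|\sum_{j\in D}\sgn(\xx_j^*(f))\xx_j\bigr\|\lesssim\|f\|$. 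As $M\le t^{-1}(tM)$, combining the two displays gives $\|P_D(f)\|\lesssim\|f\|$ with a constant depending only on $t$ and $C_w$, whence $\|P_{\tilde A}(f)\|=\|\widetilde\GG_m(f)\|\lesssim\|f\|$ uniformly in $m$ and $f$.

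The routine parts are the verification that the perturbed system is a bounded seminormalized basis and the bookkeeping of constants. The genuine difficulty, and the reason the statement is not trivial, is precisely that $\tilde A$ need \emph{not} be a greedy set for $\BB$ but only a $t$-approximate one: since a coordinate projection onto an arbitrary set is unbounded in general, the whole argument hinges on separating the honestly large coefficients (the set $G$, handled by \eqref{result:WoCondition}) from the comparable band $D$, and on the truncation-type multiplier inequalities that tame $D$. The property of unconditionality for constant coefficients \eqref{result:UCCCondition} is the structural feature standing behind these estimates.
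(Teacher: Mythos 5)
Your argument is correct, but there is nothing in the paper to compare it against in detail: the authors state Theorem~\ref{result:Perturbing} purely as a citation of Wojtaszczyk (\cite{Wo2000}*{Proposition~3}) and supply no proof of their own. What you have written is essentially a correct reconstruction of the classical argument behind that citation: the identity $\widetilde\GG_m(f)=P_{\tilde A}(f)$ showing that the new greedy sums are coordinate projections with respect to the old basis; the observation that $\tilde A$ is a $t$-approximate greedy set with $t=a/b$; the splitting $\tilde A=G\cup D$ into the strict superlevel set $G=\{j:|\xx_j^*(f)|>M\}$ (a genuine greedy set, handled directly by \eqref{result:WoCondition}) and the band $D\subseteq\{j:tM\le|\xx_j^*(f)|\le M\}$; and the two Abel-summation estimates (your multiplier bound, and the capping bound, which is the truncation lemma of Dilworth--Kalton--Kutzarova--Temlyakov \cite{DKKT2003}) that control $\|P_D(f)\|$ by a constant multiple of $t^{-1}\|f\|$. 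This is the same band-splitting strategy used in \cite{Wo2000} and in subsequent expositions, so your route is not a genuinely different one; it simply makes explicit what the paper delegates to the literature.

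Two points deserve care when the sketch is fleshed out, both standard and neither fatal. First, \eqref{result:WoCondition} as stated bounds only the tie-broken operators $\GG_m$, whereas you apply it to projections onto arbitrary greedy sets: onto $G'=\{j:|\xx_j^*(f)|\ge tM\}$ (where ties at level $tM$ may occur), onto the prefixes arising in the Abel summations, and onto arbitrary subsets of a $\pm1$-coefficient vector. Passing from the tie-broken operators to arbitrary greedy sets requires the routine perturbation argument: replace $f$ by $(1+\epsilon)P_A(f)+(f-P_A(f))$, for which $A$ becomes a strict superlevel set, apply \eqref{result:WoCondition}, and let $\epsilon\to 0$. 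Second, the constants you quote ($C_w$ in the multiplier bound, $2C_w$ in the capping bound) are not literally exact once these reductions are inserted, but as you note this is bookkeeping; the final uniform estimate $\|\widetilde\GG_m(f)\|\lesssim\|f\|$, with a constant depending only on $C_w$ and $t$, is what the characterization \eqref{result:WoCondition} requires.
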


%\[
%P_n^{(\alpha,\beta)}(x) = \sum_{k=0}^n \binom{n+\alpha}{k}\binom{n+\beta}{n-k}\left(\frac{x-1}{2}\right)^{n-k}\left(\frac{x+1}{2}\right)^{k}.
%\]
%Notice that in carrying out the ``change of variable'' $x\mapsto -x$ transform the sequence $(p_n^{(\alpha,\beta)})_{n=0}^\infty$ 
%into  the sequence $((-1)^n p_n^{(\beta,\alpha)})_{n=0}^\infty$. Hence, in our study of Jacobi polynomials, we can assume,
% if convenient, that$\beta\le\alpha$.
%We begin mentioning the estimates for its $L_p$-norms.

A powerful tool to carry out estimates involving Jacobi polynomials is the so called \textit{Darboux formula}. The next theorem establishes an expression for the 
error term associated to this formula which is accurate enough for our purposes.

\begin{theorem}[cf.~\cite{Szego}*{Theorem~8.21.13}]
\label{result:Darboux}
Let $\alpha$, $\beta>-1$ and $\delta>0$. Then
\begin{equation*}
n^{1/2} P_n^{(\alpha,\beta)}(\cos \theta)
= k(\theta)  \cos(n\theta+\phi(\theta))+ E_n(\theta) 
\end{equation*}
with 
\begin{align*}
\phi(\theta)
&=(\alpha+\beta+1)\theta/2-(2 \alpha+1)\pi/4, \\
k(\theta)
&= \pi^{-1/2}(\sin \tfrac{\theta}{2})^{-\alpha-1/2}
(\cos \tfrac{\theta}{2})^{-\beta-1/2},
\end{align*}
and the error term $E_n(\theta)$ verifies
\[
E_n(\theta) = \frac{k(\theta)}{n \sin \theta} \, O(1)
\]
for $n\in\NN$, where the $O(1)$ holds uniformly in the interval 
${\delta}/{n}\le \theta \le \pi-{\delta}/{n}$.
\end{theorem}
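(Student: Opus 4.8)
The plan is to obtain the asymptotics from the second-order differential equation satisfied by $P_n^{(\alpha,\beta)}$, by reducing it to Sturm--Liouville normal form and then applying the Liouville--Stekloff comparison method; this is the route taken in the monograph \cite{Szego}, and I sketch it here. First I would recall that $y=P_n^{(\alpha,\beta)}$ solves the Jacobi differential equation, and that under the change of variable $x=\cos\theta$ together with the substitution
\[
u(\theta)=\Big(\sin\tfrac{\theta}{2}\Big)^{\alpha+1/2}\Big(\cos\tfrac{\theta}{2}\Big)^{\beta+1/2} P_n^{(\alpha,\beta)}(\cos\theta),
\]
which removes the first-derivative term, the equation takes the normal form
\[
u''(\theta)+\big(\nu^2+F(\theta)\big)u(\theta)=0,\qquad \nu=n+\tfrac{\alpha+\beta+1}{2},
\]
where $F(\theta)=\dfrac{\frac14-\alpha^2}{4\sin^2(\theta/2)}+\dfrac{\frac14-\beta^2}{4\cos^2(\theta/2)}$ is bounded on compact subintervals of $(0,\pi)$ but behaves like $\theta^{-2}$ and $(\pi-\theta)^{-2}$ at the endpoints. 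Treating $Fu$ as an inhomogeneity and using variation of parameters against the solutions $\cos(\nu\theta),\sin(\nu\theta)$ of $v''+\nu^2v=0$, I would rewrite this as the Volterra integral equation
\[
u(\theta)=R\cos(\nu\theta-\gamma)-\frac{1}{\nu}\int_{\theta_0}^{\theta}\sin\big(\nu(\theta-t)\big)\,F(t)\,u(t)\,dt,
\]
for constants $R,\gamma$ (the amplitude and phase of the free part) still to be determined.

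The crucial point is that, because of the Volterra (triangular) structure, the $m$-th iterate of the integral operator is bounded by $\big(\nu^{-1}\int|F|\big)^m/m!$, so the Neumann series converges with sum controlled by $\exp\big(\nu^{-1}\int|F|\big)$. On the range $\delta/n\le\theta\le\pi-\delta/n$ one has $\nu^{-1}\int_\theta^{\pi-\theta}|F(t)|\,dt\le C_\delta$, since near $0$ the integral $\int_\theta|F|$ grows only like $\theta^{-1}$ and $\nu^{-1}\theta^{-1}\le C_\delta$ precisely on this range (and symmetrically near $\pi$). Hence the resolvent is bounded uniformly, and the correction to the free term is of size $R\,\nu^{-1}\int_\theta^{\pi-\theta}|F|\lesssim R/(\nu\sin\theta)$ uniformly on $\delta/n\le\theta\le\pi-\delta/n$. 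Substituting back $P_n^{(\alpha,\beta)}(\cos\theta)=\pi^{1/2}k(\theta)\,u(\theta)$, multiplying by $n^{1/2}$, and using $\nu\theta=n\theta+(\alpha+\beta+1)\theta/2$, this reproduces the claimed expansion, with $\phi(\theta)=(\alpha+\beta+1)\theta/2-\gamma$ and error term $E_n(\theta)=k(\theta)/(n\sin\theta)\,O(1)$, provided $R=\pi^{-1/2}n^{-1/2}$ and $\gamma=(2\alpha+1)\pi/4$.

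It remains to pin down the two constants $R$ and $\gamma$. They are fixed by matching $u$ near $\theta=0$, where the dominant balance is $u''+\big(\nu^2+(\tfrac14-\alpha^2)\theta^{-2}\big)u\approx 0$, whose solution regular at the origin is $\theta^{1/2}J_\alpha(\nu\theta)$. The Mehler--Heine asymptotics of $P_n^{(\alpha,\beta)}$ near $x=1$ together with the normalization \eqref{ValueInOne} identify the overall multiplicative constant and force $R=\pi^{-1/2}n^{-1/2}$ (up to a factor $1+O(1/n)$), while the large-argument expansion $J_\alpha(z)\sim(2/\pi z)^{1/2}\cos\!\big(z-\alpha\pi/2-\pi/4\big)$ produces exactly the phase shift $\gamma=\alpha\pi/2+\pi/4=(2\alpha+1)\pi/4$.

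The main obstacle is thus twofold: verifying that the Volterra estimate stays $O(1)$ all the way down to $\theta\approx\delta/n$ — so that the bound is genuinely uniform over the full range, where near the endpoints $E_n$ is merely of the same order as the main term rather than negligible — and carrying out the Bessel comparison at the endpoints accurately enough to extract the precise amplitude and phase. For the detailed bookkeeping of these two steps I would refer to the treatment of Theorem~8.21.13 in \cite{Szego}.
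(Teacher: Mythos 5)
First, a point of reference: the paper does not prove this statement at all --- it is quoted as a classical result, and the tag ``cf.~\cite{Szego}*{Theorem~8.21.13}'' \emph{is} the proof. So the only meaningful comparison is with Szeg\H{o}'s own argument. Your outline belongs to the same family of methods (Liouville--Stekloff comparison applied to the normal form $u''+(\nu^2+F(\theta))u=0$ with $u(\theta)=(\sin\tfrac{\theta}{2})^{\alpha+1/2}(\cos\tfrac{\theta}{2})^{\beta+1/2}P_n^{(\alpha,\beta)}(\cos\theta)$, which is Szeg\H{o}'s equation (4.24.2)), and your Volterra/Neumann estimates are correct \emph{on the stated range}: there $\nu^{-1}\int_\theta^{\pi-\theta}|F|\le C_\delta$, the resolvent is bounded, and the first iterate produces exactly the $k(\theta)/(n\sin\theta)$ error profile.

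The genuine gap is in how you propose to determine the amplitude $R$ and the phase $\gamma$. In your setup (trigonometric comparison solutions, the full singular potential $F$ treated as the perturbation) these constants are fixed by data at the base point $\theta_0$ of the Volterra equation, where you have no information; your plan to recover them by ``matching at $\theta\to 0$ via Mehler--Heine'' cannot be carried out as stated, for two compounding reasons. (a) Your resolvent bound degenerates as $\theta\to0$ for fixed $n$: $\nu^{-1}\int_\theta|F|\approx(\nu\theta)^{-1}\to\infty$, so the representation $u=R\cos(\nu\theta-\gamma)+O\bigl(R/(\nu\theta)\bigr)$ is only available for $\nu\theta\gtrsim1$. (b) Mehler--Heine controls $P_n^{(\alpha,\beta)}(\cos\theta)$ only on the complementary scale $\nu\theta=O(1)$, and at bounded $z=\nu\theta$ the large-argument expansion $J_\alpha(z)\sim(2/\pi z)^{1/2}\cos(z-\alpha\pi/2-\pi/4)$ --- which is what you need to read off $R$ and $\gamma$ --- carries an error $O(1/z)$ that is not small. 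Hence the two regimes never overlap in a region where both statements are accurate, and no pair $(R,\gamma)$ is actually pinned down. This is precisely the difficulty Szeg\H{o}'s proof is built to avoid: he takes the \emph{Bessel} equation as the comparison equation, so that the perturbation becomes $F(\theta)-(\tfrac{1}{4}-\alpha^2)\theta^{-2}$, which is integrable at $\theta=0$; this yields Hilb's formula (\cite{Szego}*{Theorem~8.21.12}), valid uniformly on $0<\theta\le\pi-\varepsilon$ up to and including the origin, with the constant fixed by the normalization \eqref{ValueInOne}. Theorem~8.21.13 then follows by inserting the large-argument Bessel asymptotics (legitimate there because $\nu\theta\ge\delta'>0$ on the stated range) and covering $\theta$ near $\pi$ via the reflection $P_n^{(\alpha,\beta)}(-x)=(-1)^nP_n^{(\beta,\alpha)}(x)$. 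If you want to keep your trigonometric comparison, the correct patch is different from the one you suggest: fix $(R,\gamma)$ by matching on a fixed compact interval $[\varepsilon,\pi-\varepsilon]$ against the classical Darboux formula (\cite{Szego}*{Theorem~8.21.8}, proved by the generating-function method), and use your Volterra bound only to \emph{propagate} the formula from $\theta_0=\pi/2$ down to $\theta\approx\delta/n$; with that replacement your outline closes up.
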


Darboux formula provides tight estimates for Jacobi polynomials when the variable is not too close to the endpoints  $-1$ and $1$. The technique to estimate Jacobi polynomials near $1$ is also well-known for experts. It is based on the formula
\begin{equation}
\label{derivative}
\left(P_n^{(\alpha,\beta)}\right)'= (1+\alpha+\beta+n) P_{n-1}^{(\alpha+1,\beta+1)}
\end{equation}
and the behavior of the roots of Jacobi polynomials. In the following lemma we reproduce this standard argument for the sake of completeness.

\begin{lemma}
\label{result:NearOneEstimate}
Let $\alpha>-1$ and $\beta>-1$. There is $d>0$ such that
\[
P_n^{(\alpha,\beta)}(x)\approx n^\alpha
\]
for $n\in\NN$ and $1-d/n^2 \le x \le 1$.
\end{lemma}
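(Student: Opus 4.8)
The plan is to reduce everything to the exact endpoint value \eqref{ValueInOne}, namely
$P_n^{(\alpha,\beta)}(1)=\binom{n+\alpha}{n}=\Gamma(n+\alpha+1)/(\Gamma(\alpha+1)\Gamma(n+1))\approx n^\alpha$
(the last comparison being the standard asymptotic $\Gamma(n+\alpha+1)/\Gamma(n+1)\approx n^\alpha$), and then to show that on a window of width $\approx n^{-2}$ around $1$ the polynomial cannot have moved away from this value by more than a small fraction of it. First I would record the two structural facts I intend to use. By Szeg\H{o}'s localization of the zeros, the largest zero $x_{n,1}=\cos\theta_{n,1}$ of $P_n^{(\alpha,\beta)}$ satisfies $\theta_{n,1}\approx 1/n$, so $1-x_{n,1}=1-\cos\theta_{n,1}\gtrsim 1/n^2$; the same holds for the largest zero of $P_{n-1}^{(\alpha+1,\beta+1)}$. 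Consequently I can fix $d>0$ small enough that, for every $n$, the interval $[1-d/n^2,1]$ lies strictly to the right of the largest zero of both $P_n^{(\alpha,\beta)}$ and $P_{n-1}^{(\alpha+1,\beta+1)}$.

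On such an interval neither polynomial changes sign, and since both are positive at $1$ (by \eqref{ValueInOne}) they are positive throughout. Moreover, by the classical interlacing of the zeros of a real-rooted polynomial with those of its derivative, the largest zero of $(P_n^{(\alpha,\beta)})'=(n+\alpha+\beta+1)P_{n-1}^{(\alpha+1,\beta+1)}$ lies strictly below $x_{n,1}$; hence $(P_n^{(\alpha,\beta)})'$ is positive on $[1-d/n^2,1]$ and $P_n^{(\alpha,\beta)}$ is increasing there. This already yields the upper estimate, because $0<P_n^{(\alpha,\beta)}(x)\le P_n^{(\alpha,\beta)}(1)\approx n^\alpha$ for $x\in[1-d/n^2,1]$.

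For the lower estimate I would integrate the derivative from $x$ up to $1$, writing
\[
P_n^{(\alpha,\beta)}(x)=P_n^{(\alpha,\beta)}(1)-(n+\alpha+\beta+1)\int_x^1 P_{n-1}^{(\alpha+1,\beta+1)}(t)\,dt
\]
via \eqref{derivative}. Running the monotonicity argument one index higher shows that $P_{n-1}^{(\alpha+1,\beta+1)}$ is itself positive and increasing on the interval, so its maximum there is attained at $t=1$, where \eqref{ValueInOne} gives $P_{n-1}^{(\alpha+1,\beta+1)}(1)=\binom{n+\alpha}{n-1}\approx n^{\alpha+1}$. Bounding the integrand by this value and the length of the interval by $d/n^2$ gives
\[
(n+\alpha+\beta+1)\int_x^1 P_{n-1}^{(\alpha+1,\beta+1)}(t)\,dt\lesssim \frac{d}{n^2}\cdot n\cdot n^{\alpha+1}= d\,n^\alpha.
\]
Choosing $d$ small enough that this quantity is at most half of the lower comparison constant for $P_n^{(\alpha,\beta)}(1)\approx n^\alpha$ leaves $P_n^{(\alpha,\beta)}(x)\gtrsim n^\alpha$, and combining the two bounds gives $P_n^{(\alpha,\beta)}(x)\approx n^\alpha$ on $[1-d/n^2,1]$. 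The finitely many small values of $n$ are harmless, since for each fixed $n$ the function is continuous and strictly positive on the (zero-free) interval and hence comparable to $n^\alpha$ with constants that can be absorbed.

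I expect the only genuine obstacle to be the uniform quantitative separation $1-x_{n,1}\gtrsim 1/n^2$ of the largest zero from the endpoint, which is precisely the input about ``the behavior of the roots'' that must be imported from Szeg\H{o}; the interlacing, the sign bookkeeping, and the crude integral estimate are all routine. A minor point requiring care is to pick the single constant $d$ so that it simultaneously clears the largest zeros of $P_n^{(\alpha,\beta)}$ and of $P_{n-1}^{(\alpha+1,\beta+1)}$ and makes the integral a small fraction of the endpoint value, but this involves no new idea.
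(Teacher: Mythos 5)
Your proposal is correct and follows essentially the same route as the paper's proof: locate the largest zero at distance $\approx 1/n^2$ from $1$ via Szeg\H{o}, get $P_n^{(\alpha,\beta)}(1)\approx n^\alpha$ and the derivative size $\approx n^{\alpha+2}$ from \eqref{ValueInOne} and \eqref{derivative}, and then win the lower bound by showing the loss $\int_x^1 (P_n^{(\alpha,\beta)})'(t)\,dt \lesssim d\, n^\alpha$ is a small fraction of the endpoint value when $d$ is small. The only cosmetic difference is that you make the interlacing/monotonicity bookkeeping explicit through $P_{n-1}^{(\alpha+1,\beta+1)}$, where the paper simply bounds $(P_n^{(\alpha,\beta)})'(t)\le (P_n^{(\alpha,\beta)})'(1)$ on the interval.
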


\begin{proof}
Let  $z_n$ denote the largest root of $ P_n^{(\alpha,\beta)}$ and let $\gamma_n\in(0,\pi)$ be such that $\cos(\gamma_n)=z_n$.
It is known (see \cite{Szego}*{Theorem~8.9.1}) that $\gamma_n\approx 1/n$ and, consequently, $1-z_n\approx 1/n^2$. 
Moreover, it is easy to deduce from \eqref{ValueInOne} and \eqref{derivative} that 
\[
P_n^{(\alpha,\beta)}(1)\approx n^\alpha\quad\text{and}\quad(P_n^{(\alpha,\beta)})'(1)\approx n^{\alpha+2}.
\] 
Choosing $d>0$ small enough we get
$z_n \le 1-d/n^2$ and
\[
n^\alpha\lesssim P_n^{(\alpha,\beta)}(1) - \frac{d}{n^2} (P_n^{(\alpha,\beta)})'(1).
\]
Let $1-d/n^2 \le x \le 1$. 
Since $0\le (P_n^{(\alpha,\beta)})'(t)\le (P_n^{(\alpha,\beta)})'(1)$ for any $t\in[x,1]$,
\[
P_n^{(\alpha,\beta)}(x)
= P_n^{(\alpha,\beta)}(1) - \int_x^1 (P_n^{(\alpha,\beta)})'(t)\, dt 
\ge P_n^{(\alpha,\beta)}(1) - \frac{d}{n^2} (P_n^{(\alpha,\beta)})'(1).
\]
For the reverse inequality we note that $P_n^{(\alpha,\beta)}(x)\le P_n^{(\alpha,\beta)}(1)\approx n^\alpha$.
\end{proof}

%Notice that the ``change of variable'' $x\mapsto -x$ carries out the sequence $(p_n^{(\alpha,\beta)})_{n=0}^\infty$ 
%to  the sequence $((-1)^n p_n^{(\beta,\alpha)})_{n=0}^\infty$. Hence, in our study of Jacobi polynomials, 
%we can assume when it is convenient for us that 
%$\beta\le\alpha$. Let us recall the estimates for its $L_p$-norms:

Darboux formula allows us to compute the $L_p(\mu_{\alpha,\beta})$-norms of Jacobi polynomials.
Let $\alpha,\beta$ be such that $\min\{\alpha,\beta\}> -1/2$ and denote
\[
p(\alpha,\beta)=\frac{4(\gamma+1)}{2\gamma+3}, \quad
q(\alpha,\beta)=\frac{4(\gamma+1)}{2 \gamma +1}, \quad\text{ where }\gamma=\max\{\alpha,\beta\}.
\]
Notice that $p(\alpha,\beta)$ and $q(\alpha,\beta)$ are conjugate exponents.
 We have (cf.~\cite{Leves-Kus}) that, for $n\ge 2$,
\begin{equation}
\label{result:JacobiBasisNorm}
\begin{aligned}
 \Vert p_n^{(\alpha,\beta)} \Vert_{L_p(\mu_{\alpha,\beta})}
&\approx n^{1/2}\Vert P_n^{(\alpha,\beta)} \Vert_{L_p(\mu_{\alpha,\beta})} \\
&\approx
 \begin{cases} 1, & \text{ if } 1\le p<q(\alpha,\beta), \\
 (\log n)^{1/p}, & \text{ if } p=q(\alpha,\beta),\\
 n^{(2 \gamma+1)/2-2( \gamma+1)/p}, & \text{ if } q(\alpha,\beta)<p<\infty.
\end{cases}
\end{aligned}
\end{equation}
%We remark that the previous estimates can be deduced from Darboux formula for Jacobi polynomials
%(see \eqref{eq:Darboux} below).

An elementary consequence of \eqref{result:JacobiBasisNorm} is that Jacobi polynomials,
when $\min\{\alpha, \beta\}<-1/2$, are not uniformly bounded.
Using the terminology of bases, \eqref{result:JacobiBasisNorm} yields the following result.

\begin{lemma}
\label{result:NormalizationAndBoundedness} 
Let $1\le p< \infty$ and  $\min\{\alpha, \beta\}>-1/2$.  Then
\begin{itemize}
\item[(a)] The Jacobi polynomials of indices $\alpha$ and $\beta$ form a bounded basis for $L_p(\mu_{\alpha,\beta})$ if an only if
$p(\alpha,\beta)<p<q(\alpha,\beta)$.
\item[(b)] If $p(\alpha,\beta)<p<q(\alpha,\beta)$, then both $(p_n^{(\alpha,\beta)})_{n=0}^\infty$ and $(n^{1/2} P_n^{(\alpha,\beta)})_{n=0}^\infty$
 form a semi-normalized bounded basis for $L_p(\mu_{\alpha,\beta})$.
\end{itemize}
\end{lemma}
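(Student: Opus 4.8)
The plan is to read off both statements directly from the norm asymptotics \eqref{result:JacobiBasisNorm}, after identifying the relevant coordinate functionals. Write $p'$ for the conjugate exponent of $p$, and abbreviate $P=p(\alpha,\beta)$ and $Q=q(\alpha,\beta)$, which by the computation preceding the lemma are conjugate exponents with $P<2<Q$. For every $1\le p<\infty$ the polynomials are dense in $L_p(\mu_{\alpha,\beta})$, so condition (i) holds; and since the $p_n^{(\alpha,\beta)}$ are orthonormal in $L_2(\mu_{\alpha,\beta})$ we have $\int p_n^{(\alpha,\beta)}\overline{p_m^{(\alpha,\beta)}}\,d\mu_{\alpha,\beta}=\delta_{n,m}$. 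As $\mu_{\alpha,\beta}$ is finite, each $\overline{p_n^{(\alpha,\beta)}}$ lies in every $L_{p'}(\mu_{\alpha,\beta})$ and represents, under the identification $L_p^*(\mu_{\alpha,\beta})=L_{p'}(\mu_{\alpha,\beta})$, the coordinate functional of $p_n^{(\alpha,\beta)}$. Thus $(p_n^{(\alpha,\beta)},\overline{p_n^{(\alpha,\beta)}})_{n\ge0}$ is a biorthogonal system for $L_p(\mu_{\alpha,\beta})$ for every $p$, and the only issue is the boundedness condition (iii), which here reads
\[
\sup_{n}\ \|p_n^{(\alpha,\beta)}\|_{L_p(\mu_{\alpha,\beta})}\,\|p_n^{(\alpha,\beta)}\|_{L_{p'}(\mu_{\alpha,\beta})}<\infty .
\]

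Next I would feed \eqref{result:JacobiBasisNorm} into this product, noting that only the finitely many indices $n\in\{0,1\}$ fall outside its range and contribute finite positive factors, hence affect neither the supremum nor the $\approx$ relations. The key arithmetical fact is that the exponent $s(p):=(2\gamma+1)/2-2(\gamma+1)/p$ of the supercritical regime vanishes exactly at $p=Q$ and is strictly increasing in $p$; hence $\|p_n^{(\alpha,\beta)}\|_{L_p}\approx1$ for $p<Q$ and blows up (logarithmically at $p=Q$, polynomially for $p>Q$) otherwise. Combining this with the order-reversing duality $p<Q\iff p'>P$ together with the conjugacy of $P$ and $Q$, I would check the four regions: for $P<p<Q$ both factors are $\approx1$, so the product is bounded; at $p=Q$ (hence $p'=P$) one factor is $\approx(\log n)^{1/Q}$ and the other $\approx1$, and symmetrically at $p=P$; and for $p>Q$ (hence $p'<P<Q$) the product is $\approx n^{s(p)}\to\infty$, symmetrically for $p<P$. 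This shows that (iii) holds precisely when $P<p<Q$, which is (a).

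For (b), assume $P<p<Q$. Then $p<Q$ gives $\|p_n^{(\alpha,\beta)}\|_{L_p}\approx1$ by the first case of \eqref{result:JacobiBasisNorm}, so $(p_n^{(\alpha,\beta)})$ is semi-normalized, and it is a bounded basis by (a). To transfer everything to $(n^{1/2}P_n^{(\alpha,\beta)})$ I would use the first $\approx$ in \eqref{result:JacobiBasisNorm} to get $\|n^{1/2}P_n^{(\alpha,\beta)}\|_{L_p}\approx\|p_n^{(\alpha,\beta)}\|_{L_p}\approx1$, i.e.\ semi-normalization, and observe that $n^{1/2}P_n^{(\alpha,\beta)}=\lambda_n p_n^{(\alpha,\beta)}$ with $\lambda_n=n^{1/2}/d_n\approx1$ by \eqref{eq:asym-dn}, bounded above and below away from $0$. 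Since scaling a basis element by $\lambda_n$ scales its coordinate functional by $\lambda_n^{-1}$, the product of norms in (iii) is unchanged, so $(n^{1/2}P_n^{(\alpha,\beta)})$ inherits the bounded-basis property from $(p_n^{(\alpha,\beta)})$.

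The routine part is the case-checking; the one step deserving care---really the crux---is verifying that the supercritical exponent $s(p)$ vanishes exactly at the conjugate threshold $Q=q(\alpha,\beta)$, since this is what forces the boundedness window of (iii) to coincide with the open interval $(P,Q)$ rather than with a larger or smaller range.
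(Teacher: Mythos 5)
Your proposal is correct and follows essentially the same route as the paper, which derives the lemma directly from the norm asymptotics \eqref{result:JacobiBasisNorm}: the paper's entire proof is the remark that, ``using the terminology of bases,'' \eqref{result:JacobiBasisNorm} yields the result, and your case-checking of the product $\Vert p_n^{(\alpha,\beta)}\Vert_{p}\Vert p_n^{(\alpha,\beta)}\Vert_{p'}$ via the conjugacy of $p(\alpha,\beta)$ and $q(\alpha,\beta)$, together with the rescaling $n^{1/2}P_n^{(\alpha,\beta)}=\lambda_n p_n^{(\alpha,\beta)}$, $\lambda_n\approx 1$, is exactly the intended (here made explicit) verification.
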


% With $-1/2<\min\{\alpha, \beta\}$, of course.
\begin{remark}
Notice that the range of indices $p$ for which the Jacobi polynomials with $\min\{\alpha,\beta\}>-1/2$ form a bounded basis for $L_p(\mu_{\alpha,\beta})$ coincides with the range of indices for which they are a Schauder basis of $L_p(\mu_{\alpha,\beta})$ with the natural order (cf.~\cite{Pollard}).
\end{remark}

% With $-1/2<\min\{\alpha, \beta\}$, of course.
\begin{remark}
Lemma~\ref{result:NormalizationAndBoundedness}, combined with either Theorem~\ref{NotSimultaneouslyBounded} or~\cite{KadPel1962}*{Corollary~9}, gives that Jacobi polynomials with $\min\{\alpha,\beta\}>-1/2$ are not an unconditional basis for $L_p(\mu_{\alpha,\beta})$ unless $p=2$.
This result can also be obtained from \cite{KonigNielsen1994}*{Proposition~4}.
\end{remark}

Lemma~\ref{result:Random} provides a tool to check if a basis is a suitable candidate to be quasi-greedy, and leads us to compare norms of the form
$\Vert \sum_{j\in A} \xx_j \Vert_p$ with norms of the form $\Vert (\sum_{j\in A} |\xx_j|^2)^{1/2} \Vert_p$. 
In this direction, we state the following result.

\begin{proposition}
\label{result:AverageJacobiNorm} 
Let  $\alpha$, $\beta$ and $p$ be such that $\min\{\alpha, \beta\}>-1/2$ and
  $1\le p<q(\alpha,\beta)$.
Then, for $A\subseteq \NN$ finite,
\[
\left\Vert\left(\sum_{n\in A} \left(n^{1/2} P_n^{(\alpha,\beta)}\right)^2\right)^{1/2}\right\Vert_{L_p(\mu_{\alpha,\beta})} \approx 
\left\Vert\left(\sum_{n\in A} \left(p_n^{(\alpha,\beta)}\right)^2\right)^{1/2}\right\Vert_{L_p(\mu_{\alpha,\beta})} \approx |A|^{1/2}.
\]
\end{proposition}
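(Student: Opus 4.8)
The plan is to treat the two claimed equivalences separately. The first one, $\Vert(\sum_{n\in A}(n^{1/2}P_n^{(\alpha,\beta)})^2)^{1/2}\Vert_{L_p(\mu_{\alpha,\beta})}\approx\Vert(\sum_{n\in A}(p_n^{(\alpha,\beta)})^2)^{1/2}\Vert_{L_p(\mu_{\alpha,\beta})}$, is a pointwise statement in disguise. Since $p_n^{(\alpha,\beta)}=d_n P_n^{(\alpha,\beta)}$ with $d_n\approx n^{1/2}$ by \eqref{eq:asym-dn}, we have $d_n^2\approx n$ uniformly in $n$, so that $(p_n^{(\alpha,\beta)})^2\approx (n^{1/2}P_n^{(\alpha,\beta)})^2$ pointwise on $(-1,1)$ with constants independent of $n$ (the $n=0$ terms, namely the constant $d_0$ against the constant $1$, being likewise comparable). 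Summing over $n\in A$, taking square roots, and passing to the $L_p(\mu_{\alpha,\beta})$-norm preserves these two-sided bounds, which yields the first $\approx$.

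For the second equivalence I would reduce to the orthonormal system $(p_n^{(\alpha,\beta)})_n$ and invoke Proposition~\ref{result:LdosSumEstimate} with the choice $q=2$. To apply it I must verify that the family $f_n:=p_n^{(\alpha,\beta)}$ satisfies $\Vert f_n\Vert_{L_2(\mu_{\alpha,\beta})}\approx 1$ and $\Vert f_n\Vert_{L_p(\mu_{\alpha,\beta})}\approx 1$. The first holds trivially because the $p_n^{(\alpha,\beta)}$ are $L_2(\mu_{\alpha,\beta})$-orthonormal, so $\Vert p_n^{(\alpha,\beta)}\Vert_2=1$ for every $n$. For the second, I would read off from \eqref{result:JacobiBasisNorm} that $\Vert p_n^{(\alpha,\beta)}\Vert_p\approx 1$ precisely when $1\le p<q(\alpha,\beta)$, which is exactly our hypothesis; the finitely many small indices $n\in\{0,1\}$ not covered by the $n\ge 2$ asymptotics contribute only fixed positive finite norms and so do not affect the uniform comparability. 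With both normalizations in hand, Proposition~\ref{result:LdosSumEstimate} gives $\Vert(\sum_{n\in A}(p_n^{(\alpha,\beta)})^2)^{1/2}\Vert_{L_p(\mu_{\alpha,\beta})}\approx |A|^{1/2}$, and combining this with the first equivalence finishes the argument.

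I do not anticipate a genuine analytic obstacle here: all the real work is already packaged, lying on one side in the norm asymptotics \eqref{result:JacobiBasisNorm} (which ultimately rest on the Darboux formula of Theorem~\ref{result:Darboux} and the near-endpoint estimate of Lemma~\ref{result:NearOneEstimate}) and on the other side in Proposition~\ref{result:LdosSumEstimate} itself. The only point demanding care is to confirm that the hypothesis $p<q(\alpha,\beta)$ is exactly the threshold keeping $\Vert p_n^{(\alpha,\beta)}\Vert_p$ bounded above and below: at $p=q(\alpha,\beta)$ the norm grows like $(\log n)^{1/p}$ and the normalization hypothesis of Proposition~\ref{result:LdosSumEstimate} would fail, so the strictness of the inequality is essential and should be flagged explicitly.
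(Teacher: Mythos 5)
Your proposal is correct and follows essentially the same route as the paper, whose entire proof is to combine Proposition~\ref{result:LdosSumEstimate} with the norm asymptotics~\eqref{result:JacobiBasisNorm}; your verification of the hypotheses $\Vert p_n^{(\alpha,\beta)}\Vert_2=1$ and $\Vert p_n^{(\alpha,\beta)}\Vert_p\approx 1$ for $1\le p<q(\alpha,\beta)$ is exactly what that combination requires. Your handling of the first equivalence by the uniform pointwise comparison $d_n^2\approx n$ (rather than applying Proposition~\ref{result:LdosSumEstimate} a second time to the family $(n^{1/2}P_n^{(\alpha,\beta)})_n$, which \eqref{result:JacobiBasisNorm} equally permits) is only a cosmetic variation, and your remarks about the $n\ge 2$ restriction and the strictness of $p<q(\alpha,\beta)$ are apt.
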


\begin{proof}
Just combine Proposition~\ref{result:LdosSumEstimate} with~\eqref{result:JacobiBasisNorm}.
\end{proof}

Proposition~\ref{result:AverageJacobiNorm} says that the expected value of the $L_p(\mu_{\alpha,\beta})$-norms $\Vert \sum_{n\in A} \varepsilon_n n^{1/2} P_n^{(\alpha,\beta)}\Vert_{L_p(\mu_{\alpha,\beta})} $,
when $(\varepsilon_n)_{n\in A}$ runs over all possible signs $\{\pm 1\}^A$,
is of the order of $|A|^{1/2}$ (cf.\ \cite{AlbiacKalton2006}*{Theorem 6.2.13}). In next Proposition we find norms which deviate significantly from the average value for $p\not= 2$.

\begin{proposition}
\label{result:CCJacobiNorm} 
Let $\alpha$, $\beta$ and $p$ be such $\min\{\alpha, \beta\} >-1/2$ and 
$p(\alpha,\beta)<p<q(\alpha,\beta)$.
Then
\[
\left\Vert\sum_{n=0}^{N-1} (N+2n)^{1/2} P_{N+2n}^{(\alpha,\beta)}\right\Vert_{L_p(\mu_{\alpha,\beta})}\approx N^\omega, \quad N\in\NN,
\]
where $\omega=\max\{{(2\alpha+3)/2-2(\alpha+1)/p},{(2\beta+3)/2-2(\beta+1)/p}\}$.
\end{proposition}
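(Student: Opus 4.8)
The plan is to abbreviate $f_N:=\sum_{j=0}^{N-1}(N+2j)^{1/2}P_{N+2j}^{(\alpha,\beta)}$, substitute $x=\cos\theta$, and split the estimate of $\Vert f_N\Vert_{L_p(\mu_{\alpha,\beta})}^p$ into a bulk region $\delta/N\le\theta\le\pi-\delta/N$ and the two endpoint regions near $\theta=0$ (i.e.\ $x=1$) and near $\theta=\pi$ (i.e.\ $x=-1$). Writing $\omega_\alpha=(2\alpha+3)/2-2(\alpha+1)/p$ and $\omega_\beta=(2\beta+3)/2-2(\beta+1)/p$, so that $\omega=\max\{\omega_\alpha,\omega_\beta\}$, I would show that the endpoint near $x=1$ contributes order $N^{p\omega_\alpha}$, the endpoint near $x=-1$ contributes order $N^{p\omega_\beta}$, and the bulk contributes at most $N^{p\omega}$; since $p>p(\alpha,\beta)$ forces $\omega_\alpha,\omega_\beta>0$, adding the pieces yields $\Vert f_N\Vert_p^p\approx N^{p\max\{\omega_\alpha,\omega_\beta\}}$. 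Throughout I record that in the variable $\theta$ one has $d\mu_{\alpha,\beta}(\cos\theta)\approx(\sin\tfrac{\theta}{2})^{2\alpha+1}(\cos\tfrac{\theta}{2})^{2\beta+1}\,d\theta$.

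For the upper bound on the bulk I would apply the Darboux formula (Theorem~\ref{result:Darboux}) to each summand, giving $f_N(\cos\theta)=k(\theta)\sum_{j=0}^{N-1}\cos((N+2j)\theta+\phi(\theta))+\sum_{j=0}^{N-1}E_{N+2j}(\theta)$. Summing the geometric progression of phases produces the closed form $\sum_{j=0}^{N-1}\cos((N+2j)\theta+\phi(\theta))=\frac{\sin(N\theta)}{\sin\theta}\cos((2N-1)\theta+\phi(\theta))$, whose modulus is at most $1/\sin\theta$, while the error is controlled by $\sum_{j}|E_{N+2j}(\theta)|\lesssim\frac{k(\theta)}{\sin\theta}\sum_{j}\frac{1}{N+2j}\lesssim\frac{k(\theta)}{\sin\theta}$, valid precisely on $\delta/N\le\theta\le\pi-\delta/N$. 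Hence $|f_N(\cos\theta)|\lesssim k(\theta)/\sin\theta$ there, and inserting the explicit $k(\theta)$ together with the weight reduces the bulk integral near $\theta=0$ to $\int_{\delta/N}^{c}\theta^{\,2\alpha+1-(\alpha+3/2)p}\,d\theta\approx N^{p\omega_\alpha}$, using that $2\alpha+1-(\alpha+3/2)p<-1$ (equivalently $p>\tfrac{4(\alpha+1)}{2\alpha+3}$, a consequence of $p>p(\alpha,\beta)$), and symmetrically $\approx N^{p\omega_\beta}$ near $\theta=\pi$. For the two endpoint pieces I would instead invoke Lemma~\ref{result:NearOneEstimate}: on $[\cos(\delta/N),1]\subseteq[1-d/(N+2j)^2,1]$ one has $|P_{N+2j}(x)|\lesssim(N+2j)^\alpha$, so $|f_N(x)|\lesssim\sum_j(N+2j)^{1/2+\alpha}\approx N^{3/2+\alpha}$, and multiplying by $\mu_{\alpha,\beta}([\cos(\delta/N),1])\approx N^{-2(\alpha+1)}$ gives $\lesssim N^{p\omega_\alpha}$, with the symmetric bound $\lesssim N^{p\omega_\beta}$ near $x=-1$.

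The lower bound is the crux, and here I would deliberately avoid Darboux. Instead I restrict the integral to $x\in[1-c/N^2,1]$ for small $c$, where Lemma~\ref{result:NearOneEstimate} gives $P_{N+2j}(x)\approx(N+2j)^\alpha>0$ for every $j$; since all summands are positive there is no cancellation, $f_N(x)\approx\sum_j(N+2j)^{1/2+\alpha}\approx N^{3/2+\alpha}$ on this set, and its $\mu_{\alpha,\beta}$-measure is $\approx N^{-2(\alpha+1)}$, whence $\Vert f_N\Vert_p^p\gtrsim N^{p\omega_\alpha}$. Using the reflection identity $P_n^{(\alpha,\beta)}(-x)=(-1)^nP_n^{(\beta,\alpha)}(x)$ together with Lemma~\ref{result:NearOneEstimate} with $\alpha$ and $\beta$ interchanged, I obtain $P_{N+2j}^{(\alpha,\beta)}(x)\approx(-1)^{N+2j}(N+2j)^\beta=(-1)^N(N+2j)^\beta$ for $x\in[-1,-1+c/N^2]$; the fixed parity of the indices makes these signs coincide, so again there is no cancellation and $\Vert f_N\Vert_p^p\gtrsim N^{p\omega_\beta}$. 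Combining the two gives $\Vert f_N\Vert_p^p\gtrsim N^{p\max\{\omega_\alpha,\omega_\beta\}}$.

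The main obstacle is exactly this lower bound: near each endpoint the Darboux error $\sum_j E_{N+2j}$ is of the same order $k(\theta)/\sin\theta$ as the leading oscillatory term, so one cannot read off the size of $\Vert f_N\Vert_p$ directly from the main term of the expansion. The device that resolves this is the choice of the arithmetic progression $N,N+2,\dots,3N-2$ of indices of constant parity: this forces all the polynomials $P_{N+2j}^{(\alpha,\beta)}$ to share a common sign at each endpoint (including at $x=-1$, where the reflection introduces the harmless factor $(-1)^{N+2j}=(-1)^N$), turning the sum into a genuinely additive, cancellation-free quantity precisely on the small neighborhoods of $\pm1$ that carry the dominant part of the $L_p$ mass.
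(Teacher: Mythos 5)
Your proposal is correct and takes essentially the same approach as the paper's proof: the lower bound comes from the cancellation-free neighborhoods of $\pm 1$ via Lemma~\ref{result:NearOneEstimate} together with the constant parity of the indices $N+2j$, and the upper bound comes from the Darboux formula on the bulk after summing the geometric progression of phases, exactly as in the paper. The only (harmless) deviations are that the paper first reduces to $[0,1]$ via the reflection identity instead of treating the two endpoints directly, and that it estimates the oscillatory integral $M_N$ two-sidedly by a change of variables and dominated convergence (which is where the condition $\sigma<p$ enters), whereas you discard the oscillation with the pointwise bound $\left|\sin(N\theta)\cos((2N-1)\theta+\phi(\theta))\right|\le 1$ --- a cruder estimate that still yields the correct order $N^{p\omega_\alpha}$ because the bulk integral is dominated by $\theta\approx\delta/N$.
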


\begin{proof}
 Since $P_n^{(\alpha,\beta)}(-x)=(-1)^n P_n^{(\beta,\alpha)}(x)$ it suffices to prove the estimate
\begin{equation*}
\label{eq:esti-alpha}
I_N:=\int_0^1\left|\sum_{n\in A_N} n^{1/2} P_{n}^{(\alpha,\beta)}(x)\right|^p \, d\mu_{\alpha,\beta}(x) \approx N^\sigma,
\end{equation*}
where $A_N=\{N+2n : 0\le n\le N-1\}$ and $\sigma=p(2\alpha+3)/2-2(\alpha+1)$. Notice that the hypothesis $p(\alpha,\beta)<p<q(\alpha,\beta)$ implies $0<\sigma<p$.

Consider $d>0$ as in Lemma~\ref{result:NearOneEstimate} and choose $\theta_N\in(0,\pi)$ such that
\[
\cos(\theta_N)=x_N=1-\frac{d}{(3N-2)^2}.
\]
Write $I_N=J_N+K_N$, where
\begin{align*}
J_N &= \int_{x_N}^1 \left|\sum_{n\in A_N} n^{1/2} P_{n}^{(\alpha,\beta)}(x)\right|^p \, d\mu_{\alpha,\beta}(x),\quad \text{and}\\
%&\approx 
%\int_{x_N}^1 \left|\sum_{n\in A_N} n^{1/2} P_{n}^{(\alpha,\beta)}(x) \right|^p \,(1-x)^\alpha\, dx,\\
K_N &= \int_{0}^{x_N} \left|\sum_{n\in A_N} n^{1/2} P_{n}^{(\alpha,\beta)}(x) \right|^p \, d\mu_{\alpha,\beta}(x).
%\\ &\approx 
%\int_{\theta_N}^{\pi/2} \left|\sum_{n\in A_N} n^{1/2} P_{n}^{(\alpha,\beta)}(\cos \theta) \right|^p 
%\, (\sin(\tfrac{\theta}{2}))^{2\alpha+1}\, d\theta.
\end{align*}
Since $P_n(x)\approx n^\alpha$ for $N\in\NN$, $n\in A_N$ and $x\in[x_N,1]$,
\[
J_N\approx \left(\sum_{n\in A_N} n^{\alpha+1/2} \right)^p \int_{x_N}^1 (1-x)^\alpha\, dx \approx N^{p(\alpha+3/2)} N^{-2(\alpha +1)} = N^{\sigma}.
\]
Let $k$, $\phi$, and $E_n$ be as is Theorem~\ref{result:Darboux}.
We have $K_N \le 2^{p-1} (L_N+ M_N)$, where
\begin{align*}
L_N &= \int_0^{x_N}\left| \sum_{n\in A_N} E_{n}(\arccos x) \right|^p \, d\mu_{\alpha,\beta}(x),\quad\text{and}\\
M_N &= \int_0^{x_N}\left| k(\arccos x) \sum_{n\in A_N} \cos(n \arccos x+\phi(\arccos x) )\right|^p \, d\mu_{\alpha,\beta}(x).
\end{align*}

Since $\theta_N\approx N^{-1}$, there exists $\delta>0$ such that $\delta/N\le \theta_N$. By Theorem~\ref{result:Darboux},
\[
|E_n(\theta)|\lesssim \frac{1}{n} (\sin \tfrac{\theta}{2})^{-\alpha-3/2}
\]
for $N\in\NN$, $n\in A_N$ and $\theta\in[\theta_N,\pi/2]$.
The change of variable $x=\cos\theta$ yields
\[
L_N\lesssim\left( \sum_{n\in A_N} \frac{1}{n}\right)^p \int_{\theta_N}^{\pi/2} ( \sin\tfrac{\theta}{2})^ {-1-\sigma} \, d\theta
\approx \int_{\theta_N}^\infty \theta^{-1-\sigma}\, d\theta = \frac{1}{\sigma} \theta_N^{-\sigma}\approx N^\sigma.
\]
Using the change of variable $x=\cos\theta$ and the formula
\[
\left| \sum_{n\in A_N} \cos(n\theta+\phi(\theta))\right|
= \frac{|\sin(N\theta) \cos(2N\theta+\phi(\theta))|}{\sin \theta},
\]
which is obtained taking into account that we are adding the real part of a geometric sum, gives
\begin{align*}
M_N &\approx \int_{0}^{\pi/2} |\sin(N\theta) \cos(2N\theta+\phi(\theta))|^p (\sin\tfrac{\theta}{2})^{-1-\sigma}\,d\theta \\
&\approx \int_{0}^{\pi/2} |\sin(N\theta) \cos(2N\theta+\phi(\theta))|^p \theta^{-1-\sigma}\,d\theta \\
&= N^\sigma \int_{0}^{N\pi/2} |\sin(u) \cos(2u+\phi(u/N))|^p u^{-1-\sigma} \, du \\
& \approx N^\sigma.
\end{align*}
To deduce the last step in the estimate of $M_{N}$ we have used that
\[
 \int_0^\infty |\sin(u)|^p  u^{-1-\sigma} \, du<\infty
\]
and the Dominated Convergence Theorem.
%By the dominated convergence theorem, taking into account that, $0<\sigma<p$,
%\[
%\lim_{N} M_N=\int_0^\infty 
% |\sin(u)|^p |\cos(2u) |^p u^{-1-\sigma} \, du<\infty,
%\]
%so that $L_N\approx N^\sigma$ and the proof is over.
\end{proof}

We are now in a position to complete the proof Theorem~\ref{result:MainTheorem} as advertised.

\begin{proof}[Proof of Theorem~\ref{result:MainTheorem}]
Assume that the $L_p(\mu_{\alpha,\beta})$-normalized sequence of Jacobi polynomials %$(p_n^{(\alpha,\beta)})_{n=1}^{\infty}$ 
is a quasi-greedy basis for $L_p(\mu_{\alpha,\beta})$.
% By symmetry, we can suppose $\beta\le\alpha$.
Then, thanks to Lemma~\ref{result:NormalizationAndBoundedness}(a), $p(\alpha,\beta)<p<q(\alpha,\beta)$.

By Theorem~\ref{result:Perturbing} and Lemma~\ref{result:NormalizationAndBoundedness}(b), the basis
$(n^{1/2} P_n^{(\alpha,\beta)})_{n=0}^\infty$ is also quasi-greedy for $L_p(\mu_{\alpha,\beta})$. Combining Proposition~\ref{result:Democracy} 
(or Lemma~\ref{result:Random} together with Proposition~\ref{result:AverageJacobiNorm}) with
Proposition~\ref{result:CCJacobiNorm} we obtain
$N^\omega \approx N^{1/2}$ for $N\in\NN$.
Therefore, $\omega=1/2$, i.e.~$p=2$.
\end{proof}

% -----------------------
\subsection*{Acknowledgements}
The first two authors were partially supported by the Spanish Research Grant \textit{An\'alisis Vectorial, Multilineal y Aplicaciones}, reference number MTM2014-53009-P, and the last two authors were partially supported by the Spanish Research Grant \textit{Ortogonalidad, Teor\'{\i}a de la Aproximaci\'on y Aplicaciones}, reference number MTM2012-36732-C03-02. The first-named author  also acknowledges the support of Spanish Research Grant \textit{ Operators, lattices, and structure of Banach spaces}, 
with reference MTM2012-31286.
% -----------------------

% -----------------------
\begin{bibsection}
\begin{biblist}

\bib{AA2015}{article}{
 author={Albiac, F.},
 author={Ansorena, J. L.},
 title={Lorentz spaces and embeddings induced by almost greedy bases in Banach spaces},
 journal={Constr. Approx.},
 % date={2015}
 doi={10.1007/s00365-015-9293-3.}
}

\bib{AlbiacKalton2006}{book}{
 author={Albiac, F.},
 author={Kalton, N. J.},
 title={Topics in Banach space theory},
 series={Graduate Texts in Mathematics},
 volume={233},
 publisher={Springer},
 place={New York},
 date={2006},
 pages={xii+373},
 % isbn={978-0387-28141-4},
 % isbn={0-387-28141-X},
 % review={\MR{2192298 (2006h:46005)}},
}

\bib{BePe1958}{article}{
 author={Bessaga, C.},
 author={Pe{\l}czy{\'n}ski, A.},
 title={On bases and unconditional convergence of series in Banach spaces},
 journal={Studia Math.},
 volume={17},
 date={1958},
 pages={151--164},
 % issn={0039-3223},
 % review={\MR{0115069 (22 \#5872)}},
}

\bib{CoFe1998}{article}{
   author={C{\'o}rdoba, A.},
   author={Fern{\'a}ndez, P.},
   title={Convergence and divergence of decreasing rearranged Fourier series},
   journal={SIAM J. Math. Anal.},
   volume={29},
   date={1998},
   number={5},
   pages={1129--1139},
%   issn={0036-1410},
%   review={\MR{1613739 (99e:42006)}},
%   doi={10.1137/S0036141097320705},
}

%\bib{DKK2003}{article}{
%   author={Dilworth, S. J.},
%   author={Kalton, N. J.},
%   author={Kutzarova, D.},
%   title={On the existence of almost greedy bases in Banach spaces},
%   note={Dedicated to Professor Aleksander Pe\l czy\'nski on the occasion of
%   his 70th birthday},
%   journal={Studia Math.},
%   volume={159},
%   date={2003},
%   number={1},
%   pages={67--101},
%%  issn={0039-3223},
%%   review={\MR{2030904 (2005a:46021)}},
%%   doi={10.4064/sm159-1-4},
%}

\bib{DKKT2003}{article}{
   author={Dilworth, S. J.},
   author={Kalton, N. J.},
   author={Kutzarova, D.},
   author={Temlyakov, V. N.},
   title={The thresholding greedy algorithm, greedy bases, and duality},
   journal={Constr. Approx.},
   volume={19},
   date={2003},
   number={4},
   pages={575--597},
%   issn={0176-4276},
%   review={\MR{1998906 (2004e:41045)}},
%   doi={10.1007/s00365-002-0525-y},
}

\bib{DST2012}{article}{
   author={Dilworth, S. J.},
   author={Soto-Bajo, M.},
   author={Temlyakov, V. N.},
   title={Quasi-greedy bases and Lebesgue-type inequalities},
   journal={Studia Math.},
   volume={211},
   date={2012},
   number={1},
   pages={41--69},
%   issn={0039-3223},
%   review={\MR{2990558}},
%   doi={10.4064/sm211-1-3},
}

\bib{Gaposkin1958}{article}{
   author={Gapo{\v{s}}kin, V. F.},
   title={On unconditional bases in $L^{p}$ $(p>1)$ spaces},
   language={Russian},
   journal={Uspehi Mat. Nauk},
   volume={13},
   date={1958},
   number={4(82)},
   pages={179--184},
%   issn={0042-1316},
%   review={\MR{0100219 (20 \#6652)}},
}

\bib{JohnsonLindes2001}{article}{
   author={Johnson, W. B.},
   author={Lindenstrauss, J.},
   title={Basic concepts in the geometry of Banach spaces},
   conference={
      title={Handbook of the geometry of Banach spaces, Vol. I},
   },
   book={
      publisher={North-Holland, Amsterdam},
   },
   date={2001},
   pages={1--84},
 %  review={\MR{1863689 (2003f:46013)}},
%   doi={10.1016/S1874-5849(01)80003-6},
}

\bib{KadPel1962}{article}{
   author={Kadec, M. I.},
   author={Pe{\l}czy{\'n}ski, A.},
   title={Bases, lacunary sequences and complemented subspaces in the spaces
   $L_{p}$},
   journal={Studia Math.},
   volume={21},
   date={1961/1962},
   pages={161--176},
 %  issn={0039-3223},
 %  review={\MR{0152879 (27 \#2851)}},
}

\bib{KonigNielsen1994}{article}{
   author={K{\"o}nig, H.},
   author={Nielsen, N. J.},
   title={Vector-valued $L_p$-convergence of orthogonal series and
   Lagrange interpolation},
   journal={Forum Math.},
   volume={6},
   date={1994},
   number={2},
   pages={183--207},
%   issn={0933-7741},
%   review={\MR{1263140 (95b:42031)}},
%   doi={10.1515/form.1994.6.183},
}

\bib{KonyaginTemlyakov1999}{article}{
   author={Konyagin, S. V.},
   author={Temlyakov, V. N.},
   title={A remark on greedy approximation in Banach spaces},
   journal={East J. Approx.},
   volume={5},
   date={1999},
   number={3},
   pages={365--379},
   %issn={1310-6236},
   %review={MR1716087 (2000j:46020)},
}

\bib{Leves-Kus}{article}{
   author={Levesley, S.},
   author={Kushpel, A. K.},
   title={On the norm of the Fourier-Jacobi projection},
   journal={Numer. Funct. Anal. Optim.},
   volume={22},
   date={2001},
   %number={3},
   pages={941--952},
}

\bib{LinPel1968}{article}{
   author={Lindenstrauss, J.},
   author={Pe{\l}czy{\'n}ski, A.},
   title={Absolutely summing operators in $\LL_{p}$-spaces and their
   applications},
   journal={Studia Math.},
   volume={29},
   date={1968},
   pages={275--326},
 %  issn={0039-3223},
 %  review={\MR{0231188 (37 \#6743)}},
}

\bib{LinRos1969}{article}{
   author={Lindenstrauss, J.},
   author={Rosenthal, H. P.},
   title={The $\LL_p$ spaces},
   journal={Israel J. Math.},
   volume={7},
   date={1969},
   pages={325--349},
  % issn={0021-2172},
  % review={\MR{0270119 (42 \#5012)}},
}

\bib{Nielsen2007}{article}{
   author={Nielsen, M.},
   title={An example of an almost greedy uniformly bounded orthonormal basis
   for $L_p(0,1)$},
   journal={J. Approx. Theory},
   volume={149},
   date={2007},
   number={2},
   pages={188--192},
 % issn={0021-9045},
 % review={\MR{2374604 (2009d:46020)}},
 % doi={10.1016/j.jat.2007.04.011},
}

\bib{Pollard}{article}{
    author={Pollard, H.},
    title={The mean convergence of orthogonal series III},
    journal={Duke Math. J.},
    volume={16},
    date={1949},
    pages={189--191},
}

\bib{Szego}{book}{
    author={Szeg\H{o}, G.},
    title={Orthogonal polynomials},
    series={Amer. Math. Soc. Colloq. Publ., vol. 23},
    publisher={Amer. Math. Soc.},
    address={Providence, R.I.},
    year={1967},
    edition={3rd edition}
}

\bib{Telmyakov1998}{article}{
   author={Temlyakov, V. N.},
   title={Greedy algorithm and $m$-term trigonometric approximation},
   journal={Constr. Approx.},
   volume={14},
   date={1998},
   number={4},
   pages={569--587},
 % issn={0176-4276},
 % review={\MR{1646563 (99k:42006)}},
 % doi={10.1007/s003659900090},
}

\bib{Wo2000}{article}{
 author={Wojtaszczyk, P.},
 title={Greedy algorithm for general biorthogonal systems},
 journal={J. Approx. Theory},
 volume={107},
 date={2000},
 number={2},
 pages={293--314},
 % issn={0021-9045},
 % review={\MR{1806955 (2001k:46017)}},
 % doi={10.1006/jath.2000.3512},
}

\bib{Zippin1966}{article}{
  author={Zippin, M.},
   title={On perfectly homogeneous bases in Banach spaces},
   journal={Israel J. Math.},
   volume={4},
   date={1966},
   pages={265--272},
%  issn={0021-2172},
%  review={\MR{0209809 (35 \#705)}},
}

\end{biblist}
\end{bibsection}
% -----------------------

% ------------------------------------------------------------------------
\end{document}